\documentclass[12pt,a4paper]{amsart}

\usepackage{amssymb}
\usepackage{amsmath}

\def\ev{{\rm ev}}
\def\C{{\mathbb C}}
\def\Z{{\mathbb Z}}
\def\Q{{\mathbb Q}}

\def\CP{{\mathbb C} {\rm P}}
\def\dim{{\rm dim}}
\def\Res{{\rm Res}}

\def\oM{{\overline {\mathcal M}}}
\def\cL{{\mathcal L}}
\def\cW{\mathcal{W}}

\def\cM{{\mathcal M}}

\def\cL{{\mathcal L}}
\def\cS{{\mathcal S}}
\def\cC{{\mathcal C}}
\def\oC{{\overline C}}
\def\ident{
\begin{picture}(18,10)
\put(3,-1){$\rightarrow$}
\put(3.5,2.5){$\sim$}
\end{picture}
}
\def\rel{{\rm rel}}
\def\ch{{\rm ch}}
\newcommand{\Cs}{\mathcal{C}_\mathrm{s}}

\def\dd{d}
\def\I{\mathrm{I}}
\def\J{\mathrm{J}}

\def\diag{{\rm diag}}

\def\d{{\partial}}

\def\Tr{{\rm Tr}}

\def\p{{\mathbf p}}
\def\la{\left\langle}
\def\ra{\right\rangle}

\title[Equivalence of $r$-ELSV and $r$-BM conjectures]{Equivalence of ELSV and Bouchard-Mari\~{n}o conjectures for $r$-spin Hurwitz numbers}

\date{\today}


\newtheorem{theorem}{Theorem}[section]
\newtheorem{conjecture}[theorem]{Conjecture}
\newtheorem{lemma}[theorem]{Lemma}

\theoremstyle{definition}
\newtheorem{definition}[theorem]{Definition}
\newtheorem{notation}[theorem]{Notation}

\theoremstyle{remark}
\newtheorem{remark}[theorem]{Remark}

\author{S.~Shadrin}

\author{L.~Spitz}

\author{D.~Zvonkine}

\address{S.~S. and L.~S.: Korteweg-de~Vries Institute for Mathematics, University of Amsterdam, P.~O.~Box 94248, 1090 GE Amsterdam, The Netherlands}

\email{S.Shadrin@uva.nl, L.Spitz@uva.nl}

\address{D. Z.: Institut de Mathématiques de Jussieu - Paris Rive Gauche and CNRS 4, place Jussieu Case 247, 75252 PARIS Cedex 05}

\email{zvonkine@math.jussieu.fr}


\begin{document}

\begin{abstract} We propose two conjectures on Huwritz numbers with completed $(r+1)$-cycles, or, equivalently, on certain relative Gro{\-}mov-Witten invariants of the projective line. The conjectures are analogs of the ELSV formula and of the Bouchard-Mari\~no conjecture for ordinary Hurwitz numbers. Our $r$-ELSV formula is an equality between a Hurwitz number and an integral over the space of $r$-spin structures, that is, the space of stable curves with an $r$th root of the canonical bundle. Our $r$-BM conjecture is the statement that $n$-point functions for Hurwitz numbers satisfy the topological recursion associated with the spectral curve $x = -y^r + \log y$ in the sense of Chekhov, Eynard, and Orantin. We show that the $r$-ELSV formula and the $r$-BM conjecture are equivalent to each other and provide some evidence for both.
\end{abstract}

\maketitle

\tableofcontents

\section{Introduction}
In this paper we introduce two conjectures related to Hurwitz numbers with completed cycles that generalize two important theorems in the theory of ordinary Hurwitz numbers. The first is a generalization of the celebrated ELSV formula relating Hurwitz numbers and Hodge integrals~\cite{ELSV} that we call the $r$-ELSV conjecture. The second is a generalization of the (now proved) Bouchard-Mari\~no conjecture~\cite{BM, EMS09, BEMS} relating Hurwitz numbers and the topological recusion procedure of Chekhov, Eynard, and Orantin~\cite{ChE,EO}, that we call the $r$-BM conjecture. We prove that these two conjectures are equivalent, and provide different pieces of evidence for both. 


\subsection{The space of $r$-spin structures}

\subsubsection{The $r$-spin structures on smooth curves}

Fix an integer $r \geq 1$. Let $C$ be a (smooth compact connected) complex curve of genus~$g$ with $n$ marked (pairwise distinct numbered) points $x_1, \dots, x_n$. Denote by $K$ the canonical (cotangent) line bundle over~$C$. Choose $n$ integers $a_1, \dots, a_n \in \{ 0 , \dots, r-1 \}$ such that
$2g-2 - \sum a_i$ is divisible by~$r$.

\begin{definition}
An {\em $r$-spin structure} on~$C$ is a line bundle $\cL$ together
with an identification 
$$
\cL^{\otimes r} \ident K \left( - \sum a_i x_i \right).
$$
The {\em moduli space $\cM^{1/r}_{g,n; a_1, \dots, a_n}$} is the space of all $r$-spin structures on all smooth curves.
\end{definition}

We will denote this space by $\cM^{1/r}$ omitting the other indices if this does not lead to ambiguity.

The element $K( - \sum a_i x_i) \in {\rm Pic}(C)$ can be divided by~$r$ in $r^{2g}$ different ways. Hence there are exactly $r^{2g}$ different $r$-spin structures on every smooth curve $C$. Thus the natural morphism $\pi: \cM^{1/r} \rightarrow \cM_{g,n}$ to the moduli space of smooth curves is actually a nonramified $r^{2g}$-sheeted covering.

The space $\cM^{1/r}$ has a structure of orbifold or smooth Deligne-Mum\-ford stack. The stabilizer of a generic $r$-spin structure is $\Z/r\Z$ (except for several cases with small~$g$ and~$n$ where it can be bigger), because every line bundle $\cL$ has $r$ automorphisms given by the multiplication by $r$th roots of unity. In particular, the degree of $\pi$ is not $r^{2g}$ but $r^{2g-1}$.

\subsubsection{The compactification}
A natural compactification $\oM^{1/r}_{g,n; a_1, \dots, a_n}$
of $\cM^{1/r}_{g,n; a_1, \dots, a_n}$ was constructed 
in~\cite{Jarvis}, \cite{AbrJar}, \cite{CaCaCo},~\cite{Chiodo1}. It has the structure of an orbifold or a smooth Deligne-Mumford stack. There are three different constructions that involve different versions of the universal curve  $\oC^{1/r}_{g,n; a_1, \dots, a_n}$, but the moduli space $\oM^{1/r}_{g,n; a_1, \dots, a_n}$ is the same. 

In the geometrically most natural construction the universal curve over the compactification has orbifold fibers and $\cL$ is extended into a line bundle in the orbifold sense. The second construction is a fiberwise coarsification of the first one. After the coarsification the universal curve becomes a singular orbifold with $A_{r-1}$ singularities and $\cL$ becomes a rank~1 torsion-free sheaf rather than a line bundle. Finally, the third construction is obtained from the second one after resolving its singularities by a sequence of blow-ups. In the third construction the universal curve is smooth and $\cL$ is again a line bundle, but the morphism $\cL^{\otimes r} \to K(-\sum a_i x_i)$ has zeros on the exceptional divisors. The third construction is most convenient for intersection theoretic computations.

On a singular stable curve with an $r$-spin structure, every branch of every node carries an index $a \in \{0, \dots, r-1\}$ such that the sum of indices at each node equals $-2$ modulo~$r$. The divisibility condition $2g-2 - \sum a_i = 0 \bmod r$ is satisfied on each irreducible component of the curve. Thus for a separating node the indices of the branches are determined uniquely, while for a nonseparating node there are $r$ choices.

From now on we write simply $\oM^{1/r}$, and
$\oC^{1/r}$ for the compactified space of $r$-spin structures and its universal curve, if this does not lead to ambiguity.

\begin{remark}
The interest in $\oM^{1/r}$ was initially motivated by Witten's conjecture asserting that some natural intersection numbers on this space can be arranged into a generating series that satisfies the $r$-KdV (or $r$th higher
Gelfand-Dikii) hierarchy of partial differential 
equations~\cite{Witten2}. The conjecture is now proved~\cite{FaShZv} and other beautiful results on the intersection theory of $\oM^{1/r}$
have been obtained~\cite{Chiodo2,ShaZvo}.

One of the main ingredients in Witten's conjecture is the
so-called ``Witten top Chern class'' whose definition uses the sheaves $R^0p_*\cL$ and $R^1p_* \cL$ on $\oM^{1/r}$ in a rather involved way (see~\cite{PolVai} or~\cite{Chiodo3}). The $r$-ELSV conjecture, on the other hand, uses the more straight-forward total Chern class $c(R^1p_*\cL)/c(R^0p_*\cL)$. Therefore it is not at all clear if the $r$-ELSV formula can be related to Witten's conjecture. However, it would provide a link between the intersection theory on $\oM^{1/r}$ and integrable hierarchies via Hurwitz numbers, see Theorem~\ref{Thm:KP}.
\end{remark}

The map $\pi\colon \cM^{1/r} \to \cM_{g,n}$ extends to the compactifications of both moduli spaces $\pi \colon \oM^{1/r} \rightarrow \oM_{g,n}$. Thus the classes $\psi_1, \dots, \psi_n \in H^2(\oM_{g,n}, \Q)$ that one usually defines on $\oM_{g,n}$ (see, for instance,~\cite{Vakil}, Section~3.5) can be lifted to $\oM^{1/r}$. By abuse of notation we will denote the lifted classes by $\psi_i$ instead of $\pi^*(\psi_i)$.

The projection $p:\oC^{1/r} \rightarrow \oM^{1/r}$ induces
a push-forward of $\cL$ in the sense of derived functors:
$$
R^\bullet p_*\cL = R^0 p_* \cL - R^1 p_* \cL.
$$

\begin{notation} \label{Not:R}
We denote by $\cS$ the total Chern class 
$$
\cS = c(-R^\bullet p_*\cL) = c(R^1 p_* \cL)/c(R^0p_* \cL).
$$
\end{notation}

\subsection{The $r$-ELSV conjecture}
\label{SSec:MainConjecture}

Let $\oM^\rel = \oM_{g,m; k_1, \dots, k_n}(\CP^1)$ be the space of stable genus~$g$ maps to $\CP^1$ relative to $\infty \in \CP^1$ with profile $(k_1, \dots, k_n)$ and with $m$ marked points in the source curve. It is a compactification of the space of meromorphic functions on genus~$g$ curves with $n$ numbered poles of orders $k_1, \dots, k_n$ and $m$ more marked points. See~\cite{Vakil}, Section~5 for a precise definition and main properties. Let $\omega \in H^2(\CP^1)$ be the Poincar\'e dual class of a point.

We write every $k_i$ in the form $k_i = r p_i + (r-1-a_i)$, 
where $p_i$ is the quotient and $r-1-a_i$ the remainder of the division of $k_i$ by~$r$. Let $m = (\sum k_i + n + 2g-2)/r$. 

Denote by $h_{g,r;k_1, \dots, k_n}$ the following Gromov-Witten invariants
$$
h_{g,r;k_1, \dots, k_n}
:=
(r!)^m
\!\! 
\int\limits_{\oM^\rel} 
\!\!\!\!
\ev_1^*(\omega) \psi_1^r \cdots \ev_n^*(\omega)\psi_m^r .
$$
Okounkov and Pandharipande~\cite{OkoPan1} studied Gromov-Witten invariants as above and proved that they are equal to so-called {\em Hurwitz numbers with completed cycles} that can be computed combinatorially, see Section~\ref{Sec:CompCyc}.

Introduce the following integrals over the space of $r$-spin structures:
$$
f_{g,r;k_1, \dots, k_n}  =   m! \, r^{m+n+2g-2} \;
\prod_{i=1}^n \frac{\left(\frac{k_i}r\right)^{p_i}}{p_i!} \;\times \!\!\!\!\!\!\!\!
\int\limits_{\oM^{1/r}_{g,n;a_1, \dots, a_n}}
\!\!\!\!\!\!\!\!\!
\frac{\cS}
{\left(1 - \frac{k_1}{r} \psi_1 \right)
\dots
\left(1 - \frac{k_n}{r} \psi_n \right)}.
$$

Chiodo~\cite{Chiodo2} studied the class~$\cS$ expressing it in terms of standard cohomology classes on the moduli space, the coefficients being equal to values of Bernoulli polynomials at rational points with denominator~$r$. His work allows one to compute the numbers $f_{g,r;k_1, \dots, k_n}$.

\begin{conjecture}[$r$-ELSV] \label{Con:main}
We have
$$ 
h_{g,r;k_1, \dots, k_n} = f_{g,r;k_1, \dots,k_n}.
$$
\end{conjecture}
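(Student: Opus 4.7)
The plan is to attack Conjecture~\ref{Con:main} by virtual equivariant localization on $\oM^\rel$, extending the Graber--Vakil-type proof of the original ELSV formula (the $r=1$ case) to arbitrary $r$. I would lift the standard $\C^*$-action on $\CP^1$ fixing $0$ and $\infty$ to $\oM^\rel$, choose equivariant extensions of the point class $\omega$ and of the $\psi$-classes at the internal marked points, and apply the Graber--Pandharipande virtual localization formula to the integral defining $h_{g,r;k_1,\dots,k_n}$.

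The fixed loci should be parametrized by decorated bipartite graphs with a distinguished principal vertex over $0$ carrying the genus $g$, joined to the $n$ relative branches over $\infty$ by rational bridges realizing degree-$k_i$ totally ramified covers of $\CP^1$. The $m$ internal marked points then distribute between the principal vertex and the bridges. I expect each bridge to contribute (a) a combinatorial edge factor depending on $k_i$, and (b) a gluing denominator $1/(1-(k_i/r)\psi_i)$ at the node where it attaches to the principal vertex---this is the natural origin of the denominators appearing in $f_{g,r;k_1,\dots,k_n}$. The principal-vertex contribution should be a Hodge-type integral on $\oM_{g,n}$ coming from the moving part of the obstruction theory. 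The $\ev^*(\omega)\psi^r$ insertions at the internal marked points, via the Okounkov--Pandharipande interpretation, produce completed $(r+1)$-cycle operators, and summing over the distribution of the internal points among the vertex and the bridges should reorganize into the combinatorial prefactor $m!\,r^{m+n+2g-2}\prod_i (k_i/r)^{p_i}/p_i!$ of $f$.

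The second and harder half of the argument must rewrite the resulting Hodge-type integral on $\oM_{g,n}$ as the integral of $\cS/\prod_i (1-(k_i/r)\psi_i)$ over $\oM^{1/r}_{g,n;a_1,\dots,a_n}$. The natural tool is Chiodo's formula~\cite{Chiodo2}, which expands $\cS$ in tautological classes with coefficients given by Bernoulli polynomials evaluated at $a_i/r$. I expect the main obstacle to be precisely this match: on the localization side the residues $a_i \equiv r-1-k_i \pmod r$ appear only modularly, through the edge weights and through the completed-cycles combinatorics at the internal points, while on the $r$-spin side they appear as arguments of Bernoulli polynomials. For $r=1$ the required identity reduces to classical Bernoulli/Stirling combinatorics and recovers the original ELSV proof; for $r\geq 2$ it seems to demand a genuinely new identity linking characters of completed $(r+1)$-cycles to values of $B_k$ at rational arguments, which is not in the current literature. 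An alternative route opened by the main theorem of this paper would be to prove the $r$-BM conjecture by a direct residue calculation on the spectral curve $x=-y^r+\log y$, generalizing Eynard--Mulase--Safnuk, but the combinatorial core of that approach appears to be of comparable difficulty.
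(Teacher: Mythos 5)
First, a point of framing: Conjecture~\ref{Con:main} is not proved in this paper at all. The paper's contribution (Theorem~\ref{Thm:Main}) is the equivalence of the $r$-ELSV statement with the $r$-BM statement via Givental/Teleman theory and the identification of \cite{DOSS12}, plus computational and matrix-model evidence; the authors explicitly say their evidence is not geometric and that the sought relation between virtual classes "is not elucidated so far." So your proposal should be judged as an attempted proof of an open conjecture, and as such it is a program, not a proof --- you say so yourself in the final paragraph, and the step you defer is precisely the entire content of the conjecture.

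The gap is concrete. Virtual localization on $\oM^\rel$ with the standard $\C^*$-action produces, at the genus-$g$ vertex over the fixed point, contributions built from the equivariant Euler class of the moving part of the obstruction theory, i.e.\ Hodge classes $\lambda_j$ (Chern classes of $R^1\pi_*$ of the pullback of $\cO_{\CP^1}(-1)$, equivalently of the Hodge bundle with a torus weight) integrated over $\oM_{g,n}$, together with edge factors and node denominators of the form $1/(w_i-\psi)$. Nothing in this geometry produces the space $\oM^{1/r}_{g,n;a_1,\dots,a_n}$ or the class $\cS=c(-R^\bullet p_*\cL)$: for $r=1$ the match works because $\pi_*\cS$ is literally $1-\lambda_1+\lambda_2-\cdots$, but for $r\geq 2$ Chiodo's formula shows $\pi_*\cS$ is a Bernoulli-weighted combination of $\kappa$-, $\psi$- and \emph{boundary} classes that is not expressible through Hodge classes alone, and no identity converting the localization vertex into it is known --- that missing identity is the conjecture, not a lemma one can expect to supply in passing. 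There are also technical problems upstream of this: with the descendant insertions $\ev_i^*(\omega)\psi_i^r$ at the $m$ internal points you cannot simultaneously invoke the Okounkov--Pandharipande completed-cycle interpretation (which is an output of their GW/Hurwitz correspondence, proved by entirely different operator-formalism methods) and a fixed-point analysis in which those same insertions are distributed over vertices and bridges; and your assumption that the fixed loci reduce to a single principal vertex joined by totally ramified rational bridges, with the clean denominators $1/(1-(k_i/r)\psi_i)$, is exactly the simplification special to simple ramification ($r=1$) and a judicious choice of equivariant lifts --- for $r\geq 2$ the ramification profiles over $0$ are unconstrained and the fixed-locus combinatorics does not collapse this way. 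So the proposal identifies a plausible-sounding route but does not close, and the point where it stalls is the known open core of the problem rather than a routine verification.
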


This conjecture suggests a hidden equality between virtual fundamental classes. More precisely, if we consider the space of relative stable maps $f$ such that $df$ has an $r$-th root, the conjecture seems to imply that its virtual fundamental class is obtained from the virtual fundamental class of the space of all stable maps after multiplication by $r$th powers of $\psi$-classes. However this is not elucidated so far and our evidence for this conjecture is not geometric.

The $r$-ELSV formula was first conjectured in~\cite{Zvo}. 
Since both sides are computable, the conjecture can be tested on a computer, and there is virtually no doubt that it is correct. The conjecture is proved in genus~$0$, and also for $(g,n) = (1,1)$ and arbitrary~$r$ (unpublished).

\subsection{The $r$-Bouchard-Mari\~no conjecture}
In~\cite{ChE,EO} Chekhov, Eynard, and Orantin assigned to every plane analytic complex curve $\cC$ a series of invariants $\cW_{g,n}(\cC)$ called {\em correlation $n$-forms}. Each $\cW_{g,n}$ is a meromorphic $n$-form on~$\cC^n$. We do not review the construction here.

\begin{definition}
The {\em $n$-point functions of genus~$g$} for $r$-Hurwitz numbers and for $r$-spin integrals are defined as
\begin{equation}\label{eq:ng-funct1}
H_{g,r}(x_1, \dots, x_n) = \sum_{k_1, \dots, k_n} \!\!\!
\frac{h_{g,r;k_1, \dots, k_n}}{m!} \exp(k_1x_1 + \cdots + k_nx_n).
\end{equation}
\begin{equation}\label{eq:ng-funct2}
F_{g,r}(x_1, \dots, x_n) = \sum_{k_1, \dots, k_n} \!\!\!
\frac{f_{g,r;k_1, \dots, k_n}}{m!} \exp(k_1x_1 + \cdots + k_nx_n).
\end{equation}
\end{definition}

We also let, for any function $f(x_1, \dots, x_n)$, 
$$
Df = \frac{\d^n f}{\d x_1 \cdots \d x_n}
dx_1 \cdots dx_n.
$$

\begin{conjecture}[$r$-BM] \label{Conj:BM}
Let $\mathcal{W}_{g,n}$ be the $n$-point correlation forms of the plane curve $x = -y^r + \log y$. Then we have
\begin{equation}
DH_{g,r}(x_1, \ldots, x_n) = \mathcal{W}_{g,n} .
\end{equation}
\end{conjecture}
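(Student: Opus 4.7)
The plan is to prove Conjecture~\ref{Conj:BM} by first establishing its equivalence with the $r$-ELSV conjecture (Conjecture~\ref{Con:main}) and then combining this equivalence with the cases in which $r$-ELSV can be verified directly (genus~$0$, $(g,n)=(1,1)$, and numerical checks). The equivalence strategy closely parallels the Eynard-Mulase-Safnuk proof of the classical Bouchard-Mari\~no conjecture: use the ELSV-type integrals to rewrite the generating series, introduce the spectral-curve coordinate, and identify the resulting $n$-forms as outputs of the Chekhov-Eynard-Orantin recursion for the curve $x = -y^r + \log y$.

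The first computational step is to apply $D$ to $F_{g,r}$ and sum out $k_1,\dots,k_n$. Exploiting the identity $\sum_{p\ge 0} (k/r)^p \psi^p/p! = \exp((k/r)\psi)$ together with the $\psi$-denominators $1/(1-(k_i/r)\psi_i)$ appearing in $f_{g,r;k_1,\dots,k_n}$, one rewrites $DF_{g,r}(x_1,\dots,x_n)$ as a single integral over $\oM^{1/r}$ whose integrand factorizes into contributions from each marked point. The change of variable $x_i = -y_i^r + \log y_i$, combined with Chiodo's expression for $\cS$ in terms of $\psi$-classes, $\kappa$-classes, and boundary classes with Bernoulli-polynomial coefficients $B_\bullet(a_i/r)$, converts these contributions into local expansions at the $r$ critical points $y^r = 1/r$ of $x(y)$. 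In parallel one expands the Bergman kernel and the recursion kernel of the spectral curve at these same points.

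The heart of the argument, and the principal obstacle, is the term-by-term matching between the local expansions coming from the $\oM^{1/r}$-integrals and those coming from the Eynard-Orantin recursion. For $r=1$ the spectral curve has a single ramification point and the Bernoulli polynomials reduce to Bernoulli numbers, which makes the combinatorial identification relatively direct. For general $r$ there are $r$ ramification points, and one has to track how the values $B_\bullet(a_i/r)$ distribute over these points and combine with $\kappa$-classes, boundary corrections, and residues of the recursion kernel. A clean organizing principle, possibly via an $r$-fold covering symmetrization of the spectral curve or via a global residue calculation on an auxiliary curve, seems essential to keep the combinatorics tractable and to carry out the identification uniformly in $(g,n)$. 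Once the equivalence is in place, Conjecture~\ref{Conj:BM} in the verified cases of $r$-ELSV follows immediately.
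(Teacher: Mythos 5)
There is a genuine gap, at two levels. First, the statement you are asked to prove is precisely what the paper leaves as a conjecture: the paper's main result (Theorem~\ref{Thm:Main}) establishes only the equivalence $DF_{g,r}=\cW_{g,n}$, i.e.\ that $r$-BM holds if and only if $r$-ELSV holds, and the paper offers the matrix-model and $(0,1)$-geometry material only as \emph{evidence}. Your plan of ``equivalence plus the verified cases of $r$-ELSV'' therefore yields at best the genus-$0$ and $(g,n)=(1,1)$ instances of Conjecture~\ref{Conj:BM}; numerical checks for other $(g,n)$ are not a proof, so the conjecture in its full generality is not established by this route (nor by the paper's).

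Second, even the equivalence step is not actually carried out in your proposal: you yourself flag the term-by-term matching of the local expansions at the $r$ ramification points as ``the principal obstacle'' for which ``a clean organizing principle \dots seems essential.'' That organizing principle is exactly what the paper supplies, and it is not the direct Eynard--Mulase--Safnuk-style Laplace-transform manipulation you sketch. The paper packages the integrals $f_{g,r;k_1,\dots,k_n}$ into the cohomological field theory $\Omega_{g,n}=\pi_*(\cS)$, uses Chiodo's formula to read off its Givental $R$-matrix $R(z)=\exp\bigl[-\sum_{k\ge1}\diag_a B_{k+1}\bigl(\tfrac{a+1}{r}\bigr)z^k/k(k+1)\bigr]$, and then invokes the identification of~\cite{DOSS12} between Givental graph sums and the Chekhov--Eynard--Orantin recursion. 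The matching you are missing is done there by Laplace-transforming the odd part of $y(z_i)$ and the Bergman kernel at each ramification point of $x=-y^r+\log y$ (Lemmas~\ref{lem:y} and~\ref{lem:2p}), where the reciprocal Gamma-function asymptotics produce exactly the Bernoulli polynomials $B_{k+1}(c/r)$, together with the Lambert-function computation of the leaf functions $\tilde\xi_a$ (Lemma~\ref{lem:leaves}) and a careful bookkeeping of the scalar prefactors (Lemma~\ref{lem:mainlemma}). Without this $R$-matrix/CohFT mechanism, or a worked-out substitute for it, your argument does not establish the equivalence, and even with it the full conjecture remains open.
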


\subsection{Main theorem}

\begin{theorem} \label{Thm:Main}
Let $\mathcal{W}_{g,n}$ be the $n$-point correlation forms of the plane curve $x = -y^r + \log y$. Then we have
\begin{equation}
DF_{g,r}(x_1, \ldots, x_n) = \mathcal{W}_{g,n} .
\end{equation}
\end{theorem}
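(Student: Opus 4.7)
The plan is to compute $F_{g,r}$ directly from Chiodo's explicit description of the class $\cS$ and then identify the resulting intersection-theoretic generating function with the Chekhov-Eynard-Orantin invariants of the spectral curve $x=-y^r+\log y$ through a version of Eynard's theorem expressing $\cW_{g,n}$ as a weighted sum of intersection numbers on $\oM_{g,n}$.

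First I would apply Chiodo's formula from~\cite{Chiodo2}, which writes $\cS$ as the exponential of an explicit cohomology class on $\oM^{1/r}_{g,n;a_1,\dots,a_n}$ built from $\psi$-classes, $\kappa$-classes and boundary divisors, with coefficients given by values $B_{m+1}(a/r)$ of the Bernoulli polynomials at rational points with denominator~$r$. Pushing the resulting integrand forward along $\pi\colon\oM^{1/r}\to\oM_{g,n}$, which is generically $r^{2g-1}$ to $1$, turns $f_{g,r;k_1,\dots,k_n}$ into a finite sum of standard intersection numbers on $\oM_{g,n}$ whose coefficients depend polynomially on the $k_i/r$ and on Bernoulli values $B_{m+1}(a_i/r)$, where each $a_i\in\{0,\dots,r-1\}$ is fixed by the class of $k_i$ modulo~$r$.

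Next I would assemble the $n$-point function $F_{g,r}$ from~\eqref{eq:ng-funct2}. For each marked point the product $(k_i/r)^{p_i}/p_i!\cdot\exp(k_ix_i)$, summed over $p_i\geq 0$ with $a_i$ fixed, can be recognized as the formal Laplace transform in a local coordinate near a branch point of the spectral curve. This puts $DF_{g,r}$ into exactly the shape of the local graph expansion produced by Eynard's theorem for a spectral curve with multiple branch points: each marked point is labeled by a branch point $y_{a_i}$, each vertex of the graph contributes an integral of the Chiodo class, and stable edges are joined by expansion coefficients of the Bergman kernel. It then remains to check that the spectral data match.

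The last step is to compute the local data for the curve $x=-y^r+\log y$. Its branch points are the $r$ solutions of $dx=0$, namely $y_a = r^{-1/r}\zeta^a$ with $\zeta = e^{2\pi i/r}$, naturally indexed by $a\in\{0,\dots,r-1\}$. Choosing a uniformizing coordinate $w_a$ at each branch point, one must show that the Taylor coefficients of $y$ (driven by the $\log y$ term of the superpotential) and the off-diagonal expansion of the Bergman kernel reproduce exactly the Bernoulli values and boundary coefficients dictated by Chiodo. I expect this matching to be the main obstacle and to reduce to a family of residue identities generalizing those used in the proof of the ordinary Bouchard-Mariño conjecture in~\cite{BEMS,EMS09}; the appearance of Bernoulli polynomials evaluated at $a/r$ on the algebro-geometric side is naturally produced by the $r$-th root structure of the spectral curve on the analytic side, so the identification is conceptually clean but requires careful bookkeeping of the $r$-sheeted ramification.
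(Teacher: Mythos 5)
Your plan is essentially the paper's own proof: Chiodo's formula gives the class $\cS$ as a Givental-type $R$-matrix action with Bernoulli-polynomial coefficients, the Eynard/DOSS identification of topological recursion with graph sums of intersection numbers is invoked for $\cW_{g,n}$, and the match is completed by local computations at the $r$ ramification points of $x=-y^r+\log y$ (expansion of $y$, Bergman kernel, and leaf functions as Laplace transforms), exactly as in the paper's Lemmas on $y$, the two-point function, and the leaves. The ``residue identities'' you anticipate are carried out there via Hankel-contour Laplace transforms and the asymptotics of $1/\Gamma$, which is precisely where the Bernoulli values $B_{k+1}(a/r)$ emerge.
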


Thus our main theorem shows that the $r$-ELSV conjecture is equivalent to $r$-BM conjecture. The work~\cite{BEMS} provides a ``physics proof" of the Bouchard-Mari\~{n}o conjecture for ordinary Hurwitz numbers, independent of the ELSV-formula. In Section~\ref{Sec:Evidence} we generalize it to our case. The generalization goes through perfectly; however so far we are unable to transform it into a rigorous mathematical proof, though it will probably convince a mathematical physicist.

\subsection{Plan of the paper}
In Section~\ref{Sec:CompCyc} we introduce the completed cycles and completed Hurwitz numbers involved in the $r$-ELSV conjecture. 

In Section~\ref{Sec:Chiodo} we review the work of Chiodo showing that the cohomology classes~$\mathcal{S}$ form a cohomological field theory. We determine that cohomological field theory and the Givental $R$-matrix assigned to~$\cS$.

Section~\ref{sec:SCHur} contains the proof of the main theorem. It is based on the identification~\cite{DOSS12} of the Givental-Teleman theory of semi-simple cohomological field theories with the Chekhov-Eynard-Orantin topological recursion theory. We check that the $R$-matrix for the class~$\cS$ coincides with the $R$-matrix assigned to the curve $x=-y^r+\log y$.

The last two sections are devoted to evidence for the $r$-BM conjecture. We first prove in Section~\ref{sec:HurMM} that completed Hurwitz numbers can be described using a specific matrix model. In Section~\ref{Sec:Evidence} we provide evidence relating this matrix model to the spectral curve of Conjecture~\ref{Conj:BM}. We also provide some more direct evidence for the conjecture based on the ideas of~\cite{DMSS12}.

\subsection{Notation}

The following notation is used consistently throughout this paper.

\begin{itemize}

\item $r \geq 1$ is an integer;
we work with the space of $r$-spin structures.

\item $\I$ is the imaginary unit, $\I^2 = -1$.

\item $\J$ is the primitive $r$th root of unity, $\J^r = 1$.

\item
$g \geq 0$ is the (arithmetic) genus
of the curves~$C$ under consideration.

\item
$n \geq 1$ is the number of marked points on $C$;
the points themselves are denoted by $x_1, \dots, x_n$.

\item
$k_1, \dots, k_n$ is an $n$-tuple of positive integers such that
$m = (\sum k_i + n + 2g-2)/r$ is also an integer. We consider
meromorphic functions on~$C$ with $n$ poles of orders
$k_1, \dots, k_n$ at the marked points $x_1, \dots, x_n$.
Such a function represents a ramified covering of $\CP^1$ 
of degree $\sum k_i$ with a ramification point of type
$(k_1, \dots, k_n)$ at $\infty$. We also require the
covering to have exactly $m$ additional
ramification points of multiplicity~$r$.

\item
Each $k_i$ determines a unique couple of integers 
$p_i, a_i$ such that $k_i = r p_i + (r-1-a_i)$ and 
$a_i \in \{0 , \dots, r-1 \}$.
Thus $p_i$ is the quotient and $a_i$ the 
``reversed remainder'' of the division of $k_i$ by~$r$.
The $r$-spin structures we consider will be $r$th roots of
the line bundle $K(-\sum a_i x_i)$.

\item
When integrating against monomials in $\psi$-classes we will denote their powers by $\psi_1^{d_1} \cdots \psi_n^{d_n}$.
\end{itemize}

The integers $g$, $n$, $r$, $k_1, \dots, k_n$ determine the integers $m$, $p_i$, and $a_i$ uniquely.

\subsection{Acknowledgements} 
L.~S. and S.~S. were supported by a VIDI grant of the Netherlands Organization for Scientific Research. D.~Z. was supported by the grant ANR-09-JCJC-0104-01. 

The $r$-ELSV formula was conjectured by D.~Z.~\cite{Zvo} who had important discussions about it with many people before this paper appeared. We are particularly greatful to Alessandro Chiodo, Maxim Kazarian, Sergey Lando, Sergei Natanzon, Christian Okonek, Rahul Pandharipande, Dmitri Panov, Mathieu Romagny, Mikhail Shapiro, and Claire Voisin. We also thank Ga\"etan Borot, Leonid Chekhov, Petr Dunin-Barkowski, Bertrand Eynard, Motohico Mulase, and Nicolas Orantin for plenty of helpful discussions on closely related topics and teaching us the methods of spectral curve topological recursion.

\section{Completed cycles and completed Hurwitz numbers} 
\label{Sec:CompCyc}
Okounkov and Panharipande proved in~\cite{OkoPan1} that 
the left-hand side of Conjecture~\ref{Con:main} is equal to a certain type of Hurwitz numbers (enumerating ramified coverings of the sphere of a certain kind) involving so-called ``completed cycles''. We introduce them here.

\subsection{Completed cycles}

A {\em partition}~$\lambda$ of an integer~$q$ is a non-increas\-ing
finite sequence $\lambda_1 \geq \dots \geq \lambda_l$ 
such that $\sum \lambda_i =q$. 

It is known that the irreducible representations $\rho_\lambda$
of the symmetric group $S_q$ are in a natural one-to-one
correspondence with the partitions~$\lambda$ of~$q$. 
On the other hand, to a partition~$\lambda$
of~$q$ one can assign a central element $C_{p,\lambda}$ 
of the group algebra $\C S_p$ for any positive integer~$p$. 
The coefficient of a given permutation $\sigma \in S_p$ in
$C_{p,\lambda}$ is defined as the number of ways to choose
and number $l$ cycles of $\sigma$ so that their lengths are 
$\lambda_1, \dots, \lambda_l$, and the remaining $p-q$ elements are fixed points of~$\sigma$. Thus the coefficient of $\sigma$ vanishes unless its cycle lengths are $\lambda_1, \dots, \lambda_l, 1, \dots, 1$. In particular, $C_{p, \lambda} = 0$ if $p < q$. Thus $C_{p, \lambda}$ is
{\em the sum of permutations with $l$ numbered cycles
of lengths $\lambda_1, \dots, \lambda_l$ and any number of non-numbered fixed points}.

The collection of elements $C_{p, \lambda}$ for $p = 1, 2, \dots$
is called a {\em stable center element}\footnote{This has nothing to do with stable curves.}~$C_\lambda$. 
For example, the stable element $C_2$ is
the sum of all transpositions in $\C S_p$, which is well-defined for each~$p$, (in particular, equal to zero for $p=1$).

Let~$\lambda$ be a partition of~$q$ and~$\mu$ a partition of~$p$. Since $C_{p,\lambda}$ lies in the center of $\C S_p$, 
it is represented by a scalar (multiplication by a constant) 
in the representation~$\rho_\mu$ of~$S_p$. Denote this scalar
by $f_\lambda(\mu)$. Thus to a stable center element $C_\lambda$ we have assigned a function $f_\lambda$
defined on the set of all partitions. We are interested in the
vector space spanned by the functions~$f_\lambda$.

To study this space, one defines some new functions 
on the set of partitions as follows\footnote{The standard definition
involves certain additive constants that we have dropped to
simplify the expression, since these constants play no role here.}:
$$
\p_{r+1}(\mu) = \frac1{r+1} \sum_{i \geq 1} 
\left[(\mu_i - i + \frac12)^{r+1} - (-i + \frac12)^{r+1} \right]
\qquad (r \geq 0).
$$

\begin{theorem}[Kerov, Olshansky \cite{KerOls}]
The vector space spanned by the functions $f_\lambda$
coincides with the algebra generated by the functions $\p_1, \p_2, \dots$.
\end{theorem}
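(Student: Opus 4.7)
The plan has three stages: show that the span $V := \mathrm{span}\{f_\lambda\}$ is closed under pointwise multiplication, show that $V$ contains each $\p_k$, and deduce the reverse inclusion by a triangular change of basis.

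First I would verify that $V$ is a subalgebra. The natural framework is to realize each stable element $C_\lambda = (C_{p,\lambda})_{p \geq 1}$ as an element of the projective limit $Z^\infty = \varprojlim Z(\C S_p)$, taken with respect to the connecting maps that annihilate permutations not fixing the last point. The assignment $C_\lambda \mapsto f_\lambda$ is then an algebra homomorphism from the commutative algebra $Z^\infty$ into complex-valued functions on partitions under pointwise multiplication, because commuting central elements act as scalars on each irreducible representation and scalars multiply. Hence $V$ is closed under multiplication, with structure constants given by the stable class-multiplication coefficients.

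Second, I would use the Jucys--Murphy elements $J_i = (1\,i) + (2\,i) + \cdots + (i-1\,i) \in \C S_p$ to bridge the two sides. It is classical that any symmetric polynomial in $J_1, \ldots, J_p$ is central, and that such a polynomial acts on $\rho_\mu$ by the same polynomial evaluated at the contents of the boxes of $\mu$. In particular the content power sum $J_1^r + \cdots + J_p^r$ acts on $\rho_\mu$ as $\sum_{\square \in \mu} c(\square)^r$, and a Faulhaber-type identity rewrites this as $\p_{r+1}(\mu)$ up to a nonzero constant plus shifted power sums $\p_k$ with $k \leq r$. Expanding $J_1^r + \cdots + J_p^r$ in the basis of permutations and sorting by cycle type yields a class-sum expansion $\alpha C_{p,(r+1)} + \sum_\lambda \beta_\lambda C_{p,\lambda}$ with $\alpha \neq 0$ and the remaining partitions $\lambda$ smaller in the Ivanov--Kerov filtration $|\lambda| + \ell(\lambda)$; crucially, all coefficients stabilize as $p \to \infty$, so the expansion descends to a stable element of $V$. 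Induction on $r$ then places every $\p_{r+1}$ in $V$, and combined with closure under multiplication, $V$ contains the entire algebra generated by the $\p_k$.

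Third, for the reverse inclusion I would invert the triangular relation assembled above. Setting $\deg \p_k = k+1$ and extending multiplicatively makes $\{\p_\lambda := \prod_i \p_{\lambda_i}\}_\lambda$ a graded basis of $\C[\p_1, \p_2, \ldots]$ compatible with the filtration $\deg f_\lambda = |\lambda| + \ell(\lambda)$; since both families are indexed by the same set of partitions and the leading-term comparison is nondegenerate, the two bases are related by an invertible filtered change of basis, giving $V \subseteq \C[\p_1,\p_2,\ldots]$. The main obstacle is the middle step: one must simultaneously track lower-order corrections on the character side (matching the precise shifts $(-i + 1/2)^{r+1}$ in the definition of $\p_{r+1}$ via Faulhaber's formula for Bernoulli polynomials) and on the permutation side (verifying $p$-stability of each class-sum coefficient in the Jucys--Murphy expansion). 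This careful bookkeeping is the heart of Kerov and Olshansky's original argument.
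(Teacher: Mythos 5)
The paper does not actually prove this statement: it is quoted from Kerov--Olshansky \cite{KerOls} and used as a black box, so there is no internal argument to compare yours against. Your outline is the standard route to this theorem (stability of class-sum structure constants, Jucys--Murphy/content power sums to capture the $\p_k$, and a weight-filtration triangularity to invert), and as a strategy it is sound.

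Two places in your sketch are genuinely weaker than the rest and deserve flagging. First, the map $Z(\C S_{p+1})\to Z(\C S_p)$ that kills permutations moving the last point is \emph{not} an algebra homomorphism (already for $p=1$: $(12)^2=e$, but $(12)\mapsto 0$ while $e\mapsto e$), so your ``projective limit of centers'' does not literally furnish the multiplicativity of $C_\lambda\mapsto f_\lambda$. What your step 1 actually needs is the $p$-independence of the structure constants $C_{p,\lambda}C_{p,\mu}=\sum_\nu c_{\lambda\mu}^\nu C_{p,\nu}$ in the numbered-cycle normalization (Farahat--Higman; Ivanov--Kerov's partial-permutation algebra is the clean framework); once that is in hand, evaluating on $\rho_\pi$ gives $f_\lambda f_\mu=\sum_\nu c^\nu_{\lambda\mu}f_\nu$ and closure of the span under multiplication, with no limit construction required. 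Second, in step 3 the nondegeneracy of the leading terms does not follow from the two families being indexed by the same partitions; you must actually prove a triangular relation such as $\p_\lambda=f_\lambda+(\text{terms }f_\mu\text{ of lower weight }|\mu|+\ell(\mu))$. This is available from your own step 2 --- the $(\lambda_i+1)$-cycle class enters $\sum_i J_i^{\lambda_i}$ with coefficient $1$ and all other classes have strictly smaller weight --- combined with the observation that the top-weight part of $C_{(\lambda_1)}\cdots C_{(\lambda_\ell)}$ (disjoint supports) is exactly $C_\lambda$ in the numbered-cycle convention; but as written this key nondegeneracy is asserted rather than established, and it, together with the two stability statements, is precisely the content one would have to supply to turn the sketch into a proof.
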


As a corollary, to each stable center element
$C_\lambda$ we can assign a polynomial in $\p_1, \p_2, \dots$
and, conversely, each $\p_{r+1}$ corresponds to a linear
combination of stable center elements $C_\lambda$.

\begin{definition} \label{Def:CompCyc}
The linear combination of stable center elements corresponding
to $\p_{r+1}$ is called the {\em completed $(r+1)$-cycle} and denoted
by~$\oC_{r+1}$.
\end{definition}

The first completed cycles are:
$$
\begin{array}{ccl}
\oC_1&=& C_1,\\
\oC_2&=& C_2,\\
\oC_3&=& C_3 + C_{1,1} + \frac1{12} C_1,\\
\oC_4&=& C_4 + 2 C_{2,1} + \frac54 C_2,\\
\oC_5&=& C_5 + 3 C_{3,1} + 4 C_{2,2} + \frac{11}3 C_3
         + 4 C_{1,1,1} + \frac32 C_{1,1} + \frac1{80} C_1.
\end{array}
$$

For reasons that will become clear later, we say that a stable
center element $C_\lambda$ involved in the completed cycle
$\oC_{r+1}$ has {\em genus defect} $[r+2 - \sum (\lambda_i + 1)]/2$.

\subsection{Completed Hurwitz numbers}

Let $K = \sum k_i$. Recall that the completed $(r+1)$-cycle
can be considered as a central element of the group
algebra $\C S_K$. An {\em $r$-factorization
of type $(k_1, \dots, k_n)$} in the symmetric group $S_K$ is a  factorization
$$
\sigma_1 \dots \sigma_m = \sigma
$$
such that 
(i)~the cycle lengths of~$\sigma$ equal $k_1, \dots, k_n$ and
(ii)~each permutation $\sigma_i$ enters the completed $(r+1)$-cycle
with a nonzero coefficient.
The product of these coefficients for $i$ going from 1 to~$m$
is called the {\em weight} of the $r$-factorization.

Choose $m$ points $y_1, \dots, y_m \in \C$ and a
system of $m$ loops $s_i \in \pi_1(\C \setminus \{ y_1, \dots, y_m \})$,
$s_i$ going around $y_i$. Then
to an $r$-factorization one can assign a family of stable maps 
from nodal curves to $\CP^1$. This is done in the following way.
(i)~Consider the covering of $\CP^1$ ramified over $y_1, \dots, y_m$,
and $\infty$ with monodromies given by
$\sigma_1, \dots, \sigma_m$ and $\sigma^{-1}$ (relative to the
chosen loops).
(ii)~If $\sigma_i$ has $l_i$ distinguished cycles and genus 
defect~$g_i$, glue a curve of genus $g_i$ with
$l_i$ marked points to the $l_i$ preimages of the $i$th
ramification point that correspond to the distinguished cycles.
The covering mapping is extended on this new component by 
saying that it is entirely projected to the $i$th ramification
point.
(iii)~Among the newly added components, contract those that are
unstable.

One can easily check that the arithmetic genus of the curve~$C$
constructed in this way is equal to~$g$.
The complex structure on the newly added components of~$C$ 
can be chosen arbitrarily, which implies that in general we obtain
not a unique stable map, but a family of stable maps. 

An $r$-factorization is called {\em transitive} if the curve~$C$
assigned to the factorization is connected, in other words if
one can go from every element of $\{1, \dots, K \}$ to
any other by applying the permutations $\sigma_i$ and jumping
from one distinguished cycle of $\sigma_i$ to another one.

\begin{definition}
The {\em completed Hurwitz number} $h_{g,r;k_1, \dots, k_n}$
is the sum of weights of the transitive $r$-factorizations of type
$(k_1, \dots, k_n)$.
\end{definition}

\begin{theorem}[Okounkov, Pandharipande~\cite{OkoPan1}] \label{Thm:GWHurwitz}
The relative Gromov-Witten invariant
$$
(r!)^m
\!\! 
\int\limits_{\oM^\rel} 
\!\!\!\!
\ev_1^*(\omega) \psi_1^r \cdots \ev_n^*(\omega)\psi_m^r
$$
is equal to the corresponding completed Hurwitz number.
\end{theorem}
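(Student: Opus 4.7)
The plan is to follow the strategy of Okounkov and Pandharipande and work entirely in the operator formalism on the infinite wedge (semi-infinite wedge space) $\Lambda^{\infty/2}$, where both sides of the equality can be expressed as matrix elements of explicit operators acting on the basis of partitions. The advantage is that the ``completed cycles'' on the combinatorial side and the $\psi^r$-insertions on the Gromov-Witten side become transparent once one chooses the right operators.

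First I would recall the standard setup: the infinite wedge has a basis indexed by partitions, and the conjugacy class sum $C_\mu$ acts diagonally on the irreducible representation $\rho_\lambda$ by the central character $f_\mu(\lambda)$. By the Kerov--Olshansky theorem quoted above, the $\p_{r+1}$ are polynomials in the $f_\mu$, and so determine operators $\mathcal{E}_{r+1}$ on $\Lambda^{\infty/2}$ that are diagonal in the partition basis with eigenvalue $\p_{r+1}(\lambda)$ on $\lambda$. By construction, the operator associated with the completed cycle $\overline{C}_{r+1}$ is exactly $\mathcal{E}_{r+1}$, up to the constant shift discarded in the footnote. Thus the right-hand side of the theorem is, via the Burnside/Frobenius formula for counting factorizations in $S_K$, equal to a matrix element of a product of $m$ copies of $\mathcal{E}_{r+1}$ between states indexed by the profile $(k_1,\dots,k_n)$ at $\infty$ and the trivial (or cycle) profile arising from the other ramification data; the transitivity condition matches the cut connecting the source curve at the combinatorial level with the passage from disconnected to connected invariants via $\log$.

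Next, on the Gromov-Witten side I would apply the Gromov-Witten/Hurwitz correspondence of Okounkov--Pandharipande (their first paper in the series), which identifies insertions of $\ev^*(\omega)\psi^r$ at a marked (unordered) point of $\oM^{\rel}_{g,m;k_1,\dots,k_n}(\CP^1)$ with the action of a completed-cycle operator on $\Lambda^{\infty/2}$. The main input is the localization formula with respect to the standard $\C^*$-action on $\CP^1$: fixed loci are combinations of rubber moduli spaces over $0$ and $\infty$, and the Hodge/$\psi$-integrals that arise in the equivariant weights are precisely those that, after summation, reassemble the shifted symmetric function $\p_{r+1}$. The factor $(r!)^m$ that appears in the theorem is the combinatorial factor that converts the ordered marked insertions on $\oM^{\rel}$ into the sum over factorizations weighted by coefficients in $\overline{C}_{r+1}$, together with the standard normalization of the operator $\mathcal{E}_{r+1}$.

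The key step and main obstacle is the precise identification in the previous paragraph: one has to verify that, after localization, the contribution of a single marked point with insertion $\ev^*(\omega)\psi^r$ to the equivariant generating series equals, matrix-element by matrix-element, the action of $\mathcal{E}_{r+1}$. Carrying this out requires a careful combinatorial computation of Hodge integrals of the form $\int_{\oM_{g',l}} \psi^{r-\text{junk}}\lambda_{\text{stuff}}$ that arise on the rubber moduli spaces, followed by the observation that these integrals conspire to produce exactly the Bernoulli-type coefficients of $C_\lambda$ in $\overline{C}_{r+1}$, matching the values of $\p_{r+1}$. Once this operator identity is in place, the proof of the theorem is a direct comparison of matrix elements: both sides compute the same transition amplitude in $\Lambda^{\infty/2}$, with the connectedness on the geometric side corresponding to the transitivity assumption on the combinatorial side after passage to the connected generating series.
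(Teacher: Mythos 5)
The paper does not prove this statement at all: Theorem~\ref{Thm:GWHurwitz} is imported verbatim from Okounkov--Pandharipande~\cite{OkoPan1}, and the only thing the text does with it is remark that the notation $h_{g,r;k_1,\dots,k_n}$ can be used for both sides. So there is no internal argument to compare your proposal with; what you have written is, in effect, a summary of the strategy of the cited external proof.

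Judged on its own terms, your outline correctly identifies the main ingredients of the Okounkov--Pandharipande argument (operator formalism on the infinite wedge, the Frobenius/Burnside translation of the factorization count into a matrix element of diagonal operators with eigenvalues $\p_{r+1}(\lambda)$, $\C^*$-localization on $\CP^1$ with rubber contributions at $0$ and $\infty$, the $(r!)^m$ normalization matching $\tau_r(\omega)\leftrightarrow \oC_{r+1}/r!$, and the disconnected-to-connected passage via the logarithm matching transitivity). But as a proof it is circular and incomplete where it matters: you invoke ``the Gromov--Witten/Hurwitz correspondence of Okounkov--Pandharipande'' as an input, yet that correspondence \emph{is} the statement to be proved, and the step you yourself flag as the ``key step and main obstacle'' --- showing that the Hodge-type integrals produced by localization for a single $\ev^*(\omega)\psi^r$ insertion reassemble, coefficient by coefficient, into the shifted symmetric function $\p_{r+1}$, i.e.\ into the Bernoulli-weighted completion of the $(r+1)$-cycle --- is exactly the substance of the theorem and is left entirely unverified. (In~\cite{OkoPan1} this is handled by reducing, via degeneration, to an explicit evaluation of the one-point series and comparing it with the central character; none of that computation appears in your sketch.) So the proposal is a reasonable plan faithful to the known proof, but it does not constitute a proof, and within this paper the correct treatment is simply the citation.
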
 
Thus there is no clash of notation if we denote both by
$h_{g,r;k_1, \dots, k_n}$.

\subsection{A digression on Kadomtsev-Petviashvili}
The Hurwitz numbers can be arranged into a generating series
$$
G_r(\beta; p_1, p_2, \dots)
= 
\sum_{g,n} \frac1{n!} \sum_{k_1, \dots, k_n}
h_{g,r;k_1,\dots, k_n} 
\frac{\beta^m}{m!} p_{k_1} \dots p_{k_n}.
$$
(One can prove that $h_{g,r;k_1,\dots, k_n} =0$
whenever $m = (\sum k_i + n + 2g - 2)/r$ is not an integer.)

\begin{theorem} \label{Thm:KP}
The series $G_r$ is a solution of the Ka\-dom\-tsev-Pet\-viash\-vi\-li
(KP) hierarchy in variables $p_i$, $\beta$ being a parameter.
\end{theorem}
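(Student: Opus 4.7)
The plan is to realize $\exp G_r$ as a vacuum expectation value on the semi-infinite wedge space and then invoke the classical theorem (Sato; Date--Jimbo--Kashi\-wa\-ra--Miwa) that such matrix elements are $\tau$-functions of the KP hierarchy in the bosonic variables $p_i$.

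First I would pass from the connected series $G_r$ to the disconnected partition function $Z_r = \exp G_r$, since the KP hierarchy is naturally formulated at the level of the $\tau$-function. Then I set up the charge-zero sector of the fermionic Fock space $\Lambda^{\infty/2}V$, with its orthonormal basis $\{v_\lambda\}$ indexed by partitions, vacuum $v_\emptyset$, Heisenberg currents $\alpha_n$, and vertex operators $\Gamma_\pm(\mathbf{p}) = \exp\!\bigl(\sum_{k\geq 1}\frac{p_k}{k}\alpha_{\pm k}\bigr)$. Under the boson-fermion correspondence one has $\Gamma_-(\mathbf{p}) v_\emptyset = \sum_\lambda s_\lambda(\mathbf{p}) v_\lambda$, which converts character sums over partitions into fermionic matrix elements via the well-known Schur--vacuum identity.

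Second, by the Kerov--Olshansky theorem recalled in Section~\ref{Sec:CompCyc}, the completed $(r+1)$-cycle $\oC_{r+1}$ acts in the irreducible representation $\rho_\lambda$ of $S_{|\lambda|}$ by the scalar $\p_{r+1}(\lambda)$. Define the diagonal operator $\cF_{r+1}$ on Fock space by $\cF_{r+1} v_\lambda = \p_{r+1}(\lambda) v_\lambda$; by the Bloch--Okounkov formula, the shifted-symmetric function $\p_{r+1}$ lifts to a normal-ordered quadratic expression in the fermions, of the schematic form $\frac{1}{(r+1)!}\sum_{k \in \Z + 1/2} k^{r+1}\,{:}\psi_k\psi_k^*{:}$, up to an additive scalar. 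In particular $e^{\beta \cF_{r+1}}$ lies in a completion of $GL(\infty)$ acting on $\Lambda^{\infty/2}V$. Combining this with the Burnside expansion of the count of $r$-factorizations weighted by completed cycles then yields
$$
Z_r(\beta;\mathbf{p}) = \la v_\emptyset, \; \Gamma_+(\mathbf{p}) \, e^{\beta \cF_{r+1}} \, \Gamma_-(\mathbf{e}_1) \, v_\emptyset \ra,
$$
where $\mathbf{e}_1$ denotes the specialization $p_k = \delta_{k,1}$ that implements the sum over the ramification profile at $\infty$ via $\Gamma_-(\mathbf{e}_1) v_\emptyset = \sum_\lambda (\dim\rho_\lambda / |\lambda|!) \, v_\lambda$.

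Third, once $Z_r$ is put in the form $\la v_\emptyset, \, \Gamma_+(\mathbf{p}) \, g \, w \ra$ for $g = e^{\beta \cF_{r+1}} \in GL(\infty)$ and $w \in \Lambda^{\infty/2}V$, the classical Sato theory implies directly that $Z_r$ satisfies the Hirota bilinear identities of the KP hierarchy in the variables $p_i$, with $\beta$ a parameter. The main obstacle I expect is the second step: verifying that $\cF_{r+1}$ is a well-defined normal-ordered quadratic fermion operator, so that $e^{\beta \cF_{r+1}}$ genuinely preserves the Sato Grassmannian. This is exactly the content of the Kerov--Olshansky--Bloch--Okounkov dictionary between shifted-symmetric functions on partitions and quadratic Hamiltonians on the Fock space; once it is in place, the rest of the argument is formal.
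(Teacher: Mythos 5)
Your proposal is correct and is essentially the argument the paper intends: the paper gives no details beyond saying the proof is a straightforward generalization of Kazarian--Lando and Okounkov, and those proofs are precisely your fermionic construction --- writing $\exp G_r$ as a vacuum expectation $\la v_\emptyset,\Gamma_+(\mathbf{p})\,e^{\beta\cF_{r+1}}\,\Gamma_-(\mathbf{e}_1)\,v_\emptyset\ra$ with $\cF_{r+1}$ the diagonal (quadratic, normal-ordered) operator acting by $\p_{r+1}(\lambda)$, and invoking Sato theory for $GL(\infty)$-orbits of the vacuum. The only point to tidy is the normalization of $\cF_{r+1}$ (the prefactor is $\tfrac1{r+1}$, not $\tfrac1{(r+1)!}$, matching the definition of $\p_{r+1}$), which does not affect the argument.
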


The proof of this theorem is a straightforward generalization of~\cite{KazLan} and~\cite{Okounkov}.

\section{The class $\cS$} \label{Sec:Chiodo}

\subsection{Chiodo's formula}
Chiodo computed the Chern characters 
$$
\ch_k(R^\bullet p_*\cL) = \ch_k(R^0 p_*\cL) - \ch_k(R^1 p_*\cL)
$$
and obtained the following expression.

\begin{theorem}[Chiodo~\cite{Chiodo2}, Theorem~1.1.1] 
\label{Thm:Chiodo} 
We have
\begin{align}
\ch_k(R^\bullet p_*\cL) =
&
\frac{B_{k+1}(\frac1r)}{(k+1)!} \kappa_k
- \sum_{i=1}^n 
\frac{B_{k+1}(\frac{a_i+1}r)}{(k+1)!} \psi_i^k
\\ \notag &
+ \frac{r}2 \sum_{a=0}^{r-1} 
\frac{B_{k+1}(\frac{a_i+1}r)}{(k+1)!} (j_a)_* 
\frac{(\psi')^k + (\psi'')^k}{\psi'+\psi''}.
\end{align}
Here $j_a$ is the push-forward from the boundary components with a chosen node and a chosen branch of index~$a$.
\end{theorem}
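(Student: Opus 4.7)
The formula is proved by applying Grothendieck--Riemann--Roch to the universal-curve morphism $p\colon \oC^{1/r}\to \oM^{1/r}$ and the sheaf $\cL$. I would work in the third (smooth) model of the universal curve, in which $\oC^{1/r}$ is smooth and $\cL$ is an honest line bundle, at the price that the isomorphism $\cL^{\otimes r}\cong \omega_p(-\sum a_i \sigma_i)$ is twisted by the exceptional divisors $E_{a,j}$ arising when one resolves the $A_{r-1}$ singularities of the coarse universal curve. Taking $c_1$ of both sides and dividing by $r$ presents $c_1(\cL)$ as an explicit $\Q$-linear combination of $c_1(\omega_p)$, the marked-point section classes $[\sigma_i]$, and the exceptional classes $[E_{a,j}]$, with coefficients $1/r$, $-a_i/r$, and rational numbers determined by the local $A_{r-1}$ resolution.

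\emph{Second step.} Plug into GRR in the form $\ch(R^\bullet p_*\cL)=p_*\bigl(e^{c_1(\cL)}\cdot \mathrm{Td}(\omega_p^\vee)\bigr)$, the Todd class of the relative tangent being taken on the (nodal) fibres with the standard boundary correction at the double points. Extracting the degree-$k$ graded piece and expanding via the Bernoulli generating series
\[
\frac{t\,e^{xt}}{e^{t}-1}=\sum_{k\ge 0}B_{k}(x)\,\frac{t^{k}}{k!},
\]
one sees that multiplying $\mathrm{Td}(\omega_p^\vee)$ by $e^{c_1(\cL)}$ amounts to shifting the argument $x$ of this generating function by the fractional part of $c_1(\cL)$ that is locally constant along the fibre, namely $1/r$ in the bulk of a smooth fibre. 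Pushing forward by $p_*$ then turns powers of $c_1(\omega_p)$ into $\kappa$-classes and yields the bulk term $B_{k+1}(1/r)/(k+1)!\,\kappa_k$. A parallel computation at each marked section $\sigma_i$, using the self-intersection relation $[\sigma_i]^{k+1}=(-\psi_i)^k[\sigma_i]$ together with the shift encoded by the coefficient $-a_i/r$ of $[\sigma_i]$ in $c_1(\cL)$, produces the marked-point term after the reflection $B_{k+1}(1-x)=(-1)^{k+1}B_{k+1}(x)$ lands the argument on $(a_i+1)/r$.

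\emph{Main obstacle: the boundary terms.} The delicate part is the contribution from the nodes. Locally above a generic point of the boundary stratum parametrising a node whose branches carry the indices $a\in\{0,\dots,r-1\}$ and $r-2-a$, the coarse universal curve has an $A_{r-1}$ singularity resolved by a chain of $r-1$ rational curves $E_{a,1},\dots,E_{a,r-1}$. The plan is to (i)~solve the local linear system to find the coefficients of the $[E_{a,j}]$ inside $c_1(\cL)$ from the $A_{r-1}$ normal forms; (ii)~push monomials in $c_1(\cL)$ and the two branch $\psi$-classes forward along the chain of exceptional $\P^1$'s, a calculation that produces a telescoping sum collapsing precisely to the symmetric rational function $\bigl((\psi')^k+(\psi'')^k\bigr)/(\psi'+\psi'')$; and (iii)~keep track of the combinatorial factor $r/2$, where $1/2$ is the order of the generic automorphism exchanging the two branches of the node and $r$ reflects the degree of the $r$-spin covering along the boundary. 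Inserting the corresponding Bernoulli shift $x=(a+1)/r$ into the generating series produces the coefficient $\tfrac{r}{2}\,B_{k+1}((a+1)/r)/(k+1)!$ attached to $(j_a)_*$, and the three contributions assemble into the stated formula.
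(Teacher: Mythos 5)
The paper offers no proof of this statement at all: it is quoted, with attribution, from Chiodo's paper (Theorem~1.1.1 of~\cite{Chiodo2}), so there is no internal argument to measure yours against. Judged on its own terms, your outline follows a legitimate GRR strategy, and the two easy pieces you describe are indeed standard: away from the boundary one has $c_1(\cL)=\frac1r c_1(\omega_p)-\sum\frac{a_i}{r}[\sigma_i]+(\text{exceptional terms})$ in rational cohomology, and Mumford's computation with the Bernoulli generating series shifted by $1/r$ (resp.\ by the section coefficients, using $[\sigma_i]^2=-\psi_i[\sigma_i]$ and the reflection $B_{k+1}(1-x)=(-1)^{k+1}B_{k+1}(x)$) produces the $\kappa_k$ and $\psi_i^k$ terms as you say.

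However, the proposal stops exactly where the real content of the theorem begins. Your steps (i)--(iii) are a to-do list, not an argument: you never determine the multiplicities of $\cL$ along the exceptional divisors of the $A_{r-1}$ resolution; you never carry out the pushforward over the chain of $r-1$ exceptional curves, whose fibres contain $r$ new nodes each contributing its own Mumford-type boundary term with cotangent classes depending on the position in the chain; and you never show that this sum collapses to a single coefficient $B_{k+1}\bigl(\frac{a+1}{r}\bigr)/(k+1)!$ times the kernel $\bigl((\psi')^k+(\psi'')^k\bigr)/(\psi'+\psi'')$ --- calling it ``a telescoping sum'' asserts the answer rather than derives it, and the dependence on $a$ enters precisely through this computation. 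The prefactor is also only gestured at: the $\tfrac12$ comes from the double count of the two branch orderings in the sum over $a$ (the index $a$ is attached to a node \emph{with a chosen branch}), while the $r$ requires careful bookkeeping of the degree of $j_a$ and of the factor-$r$ discrepancy between cotangent lines at twisted nodes and the classes $\psi',\psi''$ pulled back from the coarse moduli space; none of this is done. For comparison, Chiodo's actual proof does not pass through the resolved model at all: he applies orbifold GRR directly to the universal $r$-th root on the universal twisted curve, where the values $B_{k+1}\bigl(\frac{a+1}{r}\bigr)$ arise as contributions of the $\mu_r$-stacky nodes. Your resolution route is plausible in principle but is the more delicate path, and in its present form it is a plan rather than a proof.
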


Chiodo's formula makes it possible to compute any class of the form
$$
\exp\left( \sum_{k \geq 1} s_k \ch_k(R^\bullet p_*\cL) \right).
$$
Specifically, we will need
$$
c(-R^\bullet p_*\cL) = \exp\left( -\sum_{k \geq 1} (k-1)! \ch_k(R^\bullet p_*\cL) \right),
$$
with $s_k = -(k-1)!$.

\subsection{Topological field theory and $R$-matrix} \label{Ssec:CohFT}
In this section we use the notion of a {\em cohomological field theory}, {\em topological field theory} and $R$-matrix action. See~\cite{PiPaZv} or~\cite{BCOV} for an introduction.

Let $\Omega_{g,n}(a_1, \dots, a_n) = \pi_*(\cS)$ be the push-forward of the class $\cS = c(R^1p_*\cL)/c(R^0p_*\cL)$ from the space of $r$-spin structures to $\oM_{g,n}$. Further, denote by $\omega_{g,n}$ the degree~0 part of $\Omega_{g,n}$.

\subsubsection{The topological field theory} \label{sec:flatbasis}
The degree~0 class $\omega_{g,n}$ is equal to the push-forward of the cohomology class~1 from the space of $r$-spin structures to the space of stable curves. Thus it is east to compute:
$$
\omega_{g,n}(a_1, \dots, a_n) = r^{2g-1} \delta_{2g-2-\sum a_i \bmod r},
$$
because the degree of $\pi: \oM^{1/r} \to \oM_{g,n}$ equals~$r^{2g-1}$. The classes $\omega_{g,n}$ form a topological field theory with unit on the vector space 
$\la  e_0, \dots, e_{r-1} \ra$ with quadratic form 
$$
\eta_{ab} = \frac1r \delta_{a+b+2 \bmod r}
$$
and unit $e_0$. 

The 3-point correlators in genus~0 are equal to 
$$
\la e_a e_b e_c \ra = \frac1r \delta_{a+b+c+2 \mod r}.
$$
Therefore the quantum product is given by
$$
e_a \bullet e_b = e_{a+b \bmod r}.
$$
The idempotents of the quantum product are
$$
\frac1r
\sum_{a=0}^{r-1} \J^{ai} e_a,
$$
where $\J$ is the primitive $r$th root of unity and $0 \leq i \leq r-1$. 

\subsubsection{The $R$-matrix and the correlators} \label{sec:Omega}
It follows from Chiodo's formula that the family of cohomology classes $\Omega_{g,n}$ can be obtained from $\omega_{g,n}$ by Givental's $R$-matrix action, where
$$
R(z) = \exp 
\left[-
\sum_{k \geq 1} \frac{
\diag_{a=0}^{r-1} \, B_{k+1}\left(\frac{a+1}{r}\right)
}{k(k+1)} z^k 
\right]
$$
in basis $(e_a)$.

This fact has been observed in~\cite{ChiZvo} in the presence of Witten's class, then in~\cite{ChiRua} in the setting we use here. 
It follows that the classes $\Omega_{g,n}$ form a semi-simple cohomological field theory.

We define the {\em correlators} $\la \tau_{d_1}^{a_1} \cdots \tau_{d_n}^{a_n} \ra^{\! \widetilde{\mathrm{coh}}}_{\!g}$ of this cohomological field theory:
\begin{equation}
\la \tau_{d_1}^{a_1} \cdots \tau_{d_n}^{a_n} \ra^{\!\widetilde{\mathrm{coh}}}_{\! g}:=
\!\!\!
\int\limits_{\oM_{g,n}}  \!\!\!\!
\Omega_{g,n}(a_1, \dots, a_n) \,
\psi_1^{d_1} \cdots \psi_n^{d_n}
=
\!\!\!\!\!\!\!\!\!\!
\int\limits_{\oM^{1/r}_{g,n;a_1, \dots, a_n}}
\!\!\!\!\!\!\!\!\!\!\!\!
\cS \, \psi_1^{d_1} \cdots \psi_n^{d_n} , 
\end{equation}
which are just the coefficients of the $n$-point function
\begin{align} \label{eq:FgrCor}
& F_{g,r}(x_1, \dots, x_n) 
\\ \notag
& 
= 
\!\!\!\!\!
\sum_{
\substack{d_1, \dots, d_n \geq 0 \\ 0 \leq a_1, \dots, a_n \leq r-1}
} 
\!\!\!\!\!
\la \tau_{d_1}^{a_1} \cdots \tau_{d_n}^{a_n} \ra^{\!\widetilde{\mathrm{coh}}}_{\! g}
r^{2g+2n-2+\frac{2g-2-\sum_{i=1}^n a_i}{r}-\sum_{i=1}^n d_i}
\\ \notag
& \phantom{ =r^{2g+2n-2}
\sum
}
\prod_{i=1}^n
\sum_{p_i=0}^{\infty}
\frac{\left(rp_i + r- a_i-1\right)^{p_i+d_i}}{p_i!}
e^{(rp_i+r-a_i-1)x_i} .
\end{align}

\section{Equivalence of the $r$-ELSV and $r$-BM conjectures} \label{sec:SCHur}

In this section we derive a formula for the $n$-point differentials $\cW_{g,n}$ of the curve $\Cs^{(r)}\colon x=-y^r+\log y$ and prove Theorem~\ref{Thm:Main}. In particular, this implies that the $r$-ELSV conjecture and the $r$-BM conjecture are equivalent.

To do that, we perform a number of local computations with the spectral curve, following the recipe of~\cite{DOSS12} in order to present the cohomological field theory corresponding to the spectral curve as a particular $R$-matrix action in the sense of Givental, applied to a topological field theory equal to a direct sum of $r$ one-dimentional topological field theories, properly rescaled. 
We prove that the cohomological field theory that we obtain this way is a multiple of $\{\Omega_{g,n}\}$ defined in Section~\ref{sec:Omega} rewritten in the basis of normalized idempotents.

\begin{proof}[Proof of Theorem~\ref{Thm:Main}.] 
Our goal is to compare the coefficients of the $n$-differentials $\cW_{g,n}$ with the correlators $\left< 
\tau_{d_1}^{a_1} \cdots \tau_{d_n}^{a_n}
\right>_{\! g}^{\! \widetilde{\mathrm{coh}}}$ of the cohomological field theory described in Section~\ref{Ssec:CohFT} in terms of the Givental group action. In~\cite{DOSS12} the coefficients of the differentials $\cW_{g,n}$ are, under some conditions and modulo some extra factors, expressed in terms of correlators of a cohomological field theory. We use the result of~\cite{DOSS12} and compare the cohomological field theories and the extra factors that appear in the statement of the theorem and in the identification formula in~\cite{DOSS12}. 

Let us outline this comparison. We have the following formula for $\cW_{g,n}$ given in~\cite{DOSS12}:
\begin{equation}\label{eq:n-pt-function-0}
\cW_{g,n}=\sum_{
\begin{smallmatrix}
i_1,\dots,i_n \\
d_1,\dots,d_n
\end{smallmatrix}}
\la 
\tau_{d_1}^{i_1} \cdots \tau_{d_n}^{i_n}
\ra_{\!g}^{\!\mathrm{t.r.}} D \left(\prod_{j=1}^n\left(-2\frac{\d}{\d x_j}\right)^{d_i}\xi_{i_j}(x_j)\right),
\end{equation}
where~$\xi_i$ are some auxilliary functions of the coordinate $x$ on the curve, we recall them below. The correlators $\la 
\tau_{d_1}^{i_1} \cdots \tau_{d_n}^{i_n}
\ra_{\!g}^\mathrm{\!t.r.}$ are represented as a sum over Givental graphs, whose structure is described in~\cite{PSL,DOSS12}.

\begin{remark} The local computations that we do in this section in order to specify all ingredients of Equation~\eqref{eq:n-pt-function-0} are a direct generalization of the computations of Eynard in~\cite[Section 8.2]{Ey11} for the curve $x=-y+\log y$ that shows that the usual ELSV formula is equivalent to the Bouchard-Mari\~no conjecture for the usual Hurwitz numbers.
\end{remark}

Note that the indices $i_1,\dots,i_n$ that we have in Equation~\eqref{eq:n-pt-function-0} refer to a summation in the basis of the normalized idempotents (cf. Section~\ref{Ssec:CohFT}), while the indices $a_1, \dots, a_n$ in Theorem~\ref{Thm:Main} refer to a summation in the natural flat basis of the cohomological field theory. \begin{notation}
Throughout this text, when we write an object with a tilde we mean refer to this object expressed in the natural flat basis, while when we write the same object without a tilde it should be interpreted as expressed in the basis of canonical idempotents. Furthermore, $\langle \cdot \rangle^{\mathrm{t.r.}}$ refers to the correlators of the cohomological field theory associated to the the spectral curve~$\Cs^r$ by the methods of~\cite{DOSS12}, whereas~$\langle \cdot \rangle^\mathrm{coh}$ refers to the correlators of the cohomological field theory~$\Omega_{g,n}$. 
\end{notation}
Changing the coordinates, we can rewrite Equation~\eqref{eq:n-pt-function-0} in the following form:\begin{equation}\label{eq:n-pt-function-1}
\cW_{g,n}=\sum_{
\begin{smallmatrix}
a_1,\dots,a_n \\
d_1,\dots,d_n
\end{smallmatrix}}
\la 
\tau_{d_1}^{a_1} \cdots \tau_{d_n}^{a_n}
\ra_{\!g}^{\!\widetilde{\mathrm{t.r.}}} D \left(\prod_{j=1}^n\left(-2\frac{\d}{\d x_j}\right)^{d_i}\tilde\xi_{a_j}(x_j)\right),
\end{equation}
where
\begin{equation}
\la 
\tau_{d_1}^{a_1} \cdots \tau_{d_n}^{a_n}
\ra_{\! g}^{\! \widetilde{\mathrm{t.r.}}} :=
\sum_{
\begin{smallmatrix}
i_1,\dots,i_n
\end{smallmatrix}
}
\la 
\tau_{d_1}^{i_1} \cdots \tau_{d_n}^{i_n}
\ra_{\!g}^{\!\mathrm{t.r.}}
\prod_{j=1}^n \J^{(a_j+1)i_j} \ ,
\end{equation}
and
\begin{equation} \label{eq:xi-a}
\tilde \xi_a := \sum_{i=0}^{r-1} r^{-1} \J^{-(a+1)i} \xi_i \  .
\end{equation}

Below, in Lemma~\ref{lem:leaves}, we prove that
\begin{equation}
\tilde \xi_a = \I \sqrt 2 r^{\frac 12 -\frac{a+1}r} \sum_{n=0}^\infty \frac{ (rn+r-a-1)^{n}}{n!} 
e^{(rn+r-a-1)x}.
\end{equation}

Further, in Lemma~\ref{lem:mainlemma} we prove that
\begin{align}
& \la 
\tau_{d_1}^{a_1} \cdots \tau_{d_n}^{a_n}
\ra_{\!g}^{\! \widetilde{\mathrm{t.r.}}}
\prod_{j=1}^n \I \sqrt {2r} \cdot r^{\frac{-(a_j+1)}r} (-2)^{d_n}
\\ \notag
& =\la
\tau_{d_1}^{a_1} \cdots \tau_{d_\ell}^{a_\ell}
\ra_{\!g}^{\!\widetilde{\mathrm{coh}}}
r^{2g+2\ell-2+\frac{2g-2-\sum_{i=1}^\ell a_i}{r}-\sum_{i=1}^\ell d_i}
\end{align}
Substituting these two expression in Equation~\eqref{eq:n-pt-function-1}, using equation~\eqref{eq:FgrCor}, we obtain the equality of Theorem~\ref{Thm:Main}, which proves the theorem.
\end{proof}

The rest of the section consists of a step-by-step computation of the expansions of various local objects on the curve $x=-y^r+\log y$ that are needed to formulate and prove Lemmas~\ref{lem:leaves} and~\ref{lem:mainlemma}. We follow the scheme of computations proposed in~\cite{DOSS12} and use the same notation as there. 

\begin{equation}
y_i := r^{-1/r} \J^i, \qquad i=0,\dots, r-1.
\end{equation}
The critical values of the function $x$ at these points are 
\begin{equation}
x_i:= -\frac{1}{r} + \frac{\I 2\pi i}{r} - \frac{\log r}{r}, \qquad i=0,\dots, r-1.
\end{equation}
We denote by $z_i$ the local coordinates near the critical points, that is, $z_i^2+x_i=x$, $i=0,1,\dots,r-1$. Let us make a choice of the expansion of function $y$ in $z_i$. One of the possible choices, that fixes $y=y(z_i)$ unambiguously is
\begin{equation}
y(z_i)=y_i+y_{i1} z_i +O(z_i^3),
\end{equation}
where
\begin{equation}
y_{i1} := \I \sqrt 2 r^{-\frac{1}{2} -\frac{1}{r}} \J^i, \qquad i=0,\dots, r-1.
\end{equation}

\subsection{Reciprocal Gamma function}

We are using the following expansion of the reciprocal Gamma function:
\begin{align}
\frac{1}{\Gamma(w+v)} & = \frac{\I}{2\pi} \int_{C_+} (-t)^{-w-v} e^{-t} \dd t 
\\ \notag
& \sim \frac{w^{-w-v+\frac 12} e^w}{\sqrt{2\pi}} \exp\left(\sum_{j=1}^\infty \frac{B_{j+1}(v)}{j(j+1)}
(-w)^{j}\right)\ ,
\end{align}
where $C_+$ is the Hankel contour~\cite{Luke} that goes around the positive real numbers, and 
$B_{j}(v)$ are the Bernoulli polynomials defined by
\begin{equation}
\frac{we^{wv}}{e^w-1} = \sum_{j=0}^\infty B_j(v)\frac{w^j}{j!}\ .
\end{equation}
Note that $B_j(1-v)=(-1)^j B_j(v)$, $j=0,1,\dots$.

\subsection{Local expansions of $y$}

Let us fix $0\leq i\leq r-1$. We are interested in the expansion of the odd part of $y=y(z_i)$ in $z_i$, that is, we consider $(y(z_i)-y(-z_i))/2$. We consider also the coordinate $t:=ry^r$. It is easy to see that 
\begin{align}
y & = t^{\frac 1r} r^{-\frac 1r} \J^i; \\
-x_i-y^r+\log y & = \frac{1}{r}-\frac{t}{r} +\frac{\log t}{r}; \\
\dd y & = t^{\frac{1-r}{r}} r^{-1-\frac{1}{r}} \J^i \dd t.
\end{align}
Note also that $t$ runs along the negative Hankel contour when $z_i$ runs from $-\infty$ to $+\infty$.

\begin{lemma} \label{lem:y} Denote the coefficients of the odd part of~$y(z_i)$ by:
\begin{equation}
\frac{y(z_i)-y(-z_i)}{2} = \I \sqrt 2 r^{-\frac{1}{2} -\frac{1}{r}} \J^i z_i \sum_{j=0}^\infty \frac{V_j (2r)^j z_i^{2j}}{(2j+1)!!}\ . 
\end{equation}
Then we have
\begin{equation}
\sum_{j=0}^\infty V_j \zeta^{j} = \exp\left(-\sum_{i=1}^\infty \frac{B_{j+1}(\frac{1}{r})}{j(j+1)}\zeta^j\right)\ . 
\end{equation}
\end{lemma}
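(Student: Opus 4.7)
The plan is to evaluate the Laplace-type integral
\[
I := \int z_i\, e^{-rwz_i^2}\, y(z_i)\, dz_i
\]
in two ways. On one hand, Gaussian-moment expansion against the ansatz stated in the lemma produces the unknown series $\sum_j V_j\, w^{-j}$ up to explicit prefactors. On the other hand, the substitution $t = ry^r$ converts $I$ into a combination of two Hankel integrals and identifies it with an explicit reciprocal Gamma value, whose asymptotic expansion is furnished by the preceding subsection. Matching the two evaluations yields the stated generating series with $\zeta = 1/w$.

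For the first evaluation, only the odd part of $y(z_i)$ contributes; substituting the lemma's ansatz and using the moment $\int z^{2j+2} e^{-rwz^2}\, dz = \sqrt{\pi/(rw)}\, (2j+1)!!/(2rw)^{j+1}$ gives
\[
I \;=\; \frac{\I\sqrt{2\pi}\,\J^i}{2\, r^{2+1/r}\, w^{3/2}}\sum_{j\ge 0} V_j\, w^{-j}.
\]
For the second evaluation, the relations $rz_i^2 = 1 - t + \log t$, $z_i\, dz_i = (1-t)\, dt/(2rt)$, $y = r^{-1/r}\J^i t^{1/r}$, and $e^{-rwz_i^2} = e^{-w}e^{wt} t^{-w}$ transform $I$ into
\[
I \;=\; \frac{\J^i e^{-w}}{2\, r^{1+1/r}}\Bigl[\int e^{wt}\, t^{1/r-w-1}\, dt \;-\; \int e^{wt}\, t^{1/r-w}\, dt\Bigr]
\]
along the negative Hankel contour inherited from the $z_i$-line. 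Rescaling $t \mapsto -t/w$ reduces each integral to the reciprocal Gamma representation quoted in the subsection; they evaluate, respectively, to $2\pi\I\, w^{w-1/r}/\Gamma(w+1-1/r)$ and $2\pi\I\, w^{w-1-1/r}/\Gamma(w-1/r)$, and the functional equation $\Gamma(w+1-1/r) = (w-1/r)\,\Gamma(w-1/r)$ collapses the bracket, giving
\[
I \;=\; \frac{\pi\I\,\J^i\, e^{-w}\, w^{w-1-1/r}}{r^{2+1/r}\, \Gamma(w+1-1/r)}.
\]

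Equating the two expressions for $I$ and plugging in the reciprocal Gamma asymptotic at $v = 1 - 1/r$, the symmetry $B_{j+1}(1-v) = (-1)^{j+1}B_{j+1}(v)$ converts Bernoulli values at $1 - 1/r$ to Bernoulli values at $1/r$; the prefactors $e^{-w}$, $w^w$, and $\sqrt{2\pi}$ cancel and what remains is
\[
\sum_{j\ge 0} V_j\, \zeta^j \;=\; \exp\!\Bigl(-\sum_{j\ge 1}\frac{B_{j+1}(1/r)}{j(j+1)}\, \zeta^j\Bigr)
\]
with $\zeta = 1/w$, as claimed. The main obstacle will be the careful bookkeeping of signs, branch cuts, and orientation along the negative Hankel contour, together with the $(-w)^{\pm j}$ parity convention in the reciprocal Gamma series; the leading coefficient $V_0 = 1$ must also be read off separately, and it is consistent with the normalization $y(z_i) = y_i + y_{i1} z_i + O(z_i^3)$ with $y_{i1} = \I\sqrt{2}\, r^{-1/2-1/r}\J^i$ fixed earlier in the subsection.
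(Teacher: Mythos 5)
Your proposal is correct and follows essentially the same route as the paper: a Laplace-type Gaussian integral evaluated once against the odd-part ansatz and once via the substitution $t=ry^r$ onto the negative Hankel contour, then matched with the reciprocal Gamma asymptotics and the reflection $B_{j+1}(1-\tfrac1r)=(-1)^{j+1}B_{j+1}(\tfrac1r)$. The only cosmetic difference is an integration by parts: the paper integrates $y'(z_i)e^{-rwz_i^2}$, producing a single $1/\Gamma(w+1-\tfrac1r)$ term, whereas you integrate $z_i\,y(z_i)e^{-rwz_i^2}$ and collapse the resulting two Hankel integrals with the functional equation $\Gamma(w+1-\tfrac1r)=(w-\tfrac1r)\Gamma(w-\tfrac1r)$, arriving at the same expansion.
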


\begin{proof} The Laplace method gives the following asymptotic expansion: 
\begin{align}
\int_{-\infty}^\infty y'(z_i) \exp(-rwz_i^2) \dd z_i & =  \I \sqrt {2\pi} r^{-1-\frac{1}{r}} \J^i w^{-\frac 12}\sum_{j=0}^\infty V_j w^{-j}\ .
\end{align}

On the other hand, 
\begin{align}
\int_{-\infty}^\infty y'(z_i) e^{-rwz_i^2} \dd z_i
& = \int_{\tilde C_-} e^{-rw\left(-x_i-y^r+\log(y)\right)} \dd y 
\\ \notag
& = r^{-\frac{1+r}{r}}\J^i\int_{C_-} e^{-w(1-t+\log t)} t^{\frac{1-r}{r}} \dd t
\\ \notag
& =  r^{-\frac{1+r}{r}}\J^i e^{-w} \int_{C_-} e^{wt} t^{-w+\frac{1-r}{r}}\dd t
\\ \notag
& =  -r^{-\frac{1+r}{r}}\J^i e^{-w} w^{w-\frac 1r} \int_{C_+} e^{-p} (-p)^{-w+\frac{1-r}{r}}\dd p
\end{align}
(we used the substitution $p=-wt$ that transforms the negative Hankel contour $C_-$ to the Hankel contour $C_+$). Thus we see that
\begin{align}
& \int_{-\infty}^\infty y'(z_i) e^{-rwz_i^2} \dd z_i
=  \frac{2\pi \I r^{-\frac{1+r}{r}}\J^i e^{-w} w^{w-\frac 1r}}{\Gamma (w+1-\frac{1}{r})}
\\ \notag
& \sim 
\frac{2\pi \I r^{-\frac{1+r}{r}}\J^i e^{-w} w^{w-\frac 1r}w^{-w+\frac 1r-\frac 12} e^w}{\sqrt{2\pi}} \exp\left(\sum_{j=1}^\infty \frac{B_{j+1}(1-\frac 1r)}{j(j+1)}
(-w)^{-j}\right)
\\ \notag
&= \sqrt{2\pi} \I r^{-\frac{1+r}{r}}\J^i w^{-\frac 12} \exp\left(-\sum_{j=1}^\infty \frac{B_{j+1}(\frac 1r)}{j(j+1)}
w^{-j}\right)\ ,
\end{align}
which proves 
\begin{equation}
\sum_{j=0}^\infty V_j w^{-j} = \exp\left(-\sum_{j=1}^\infty \frac{B_{j+1}(\frac{1}{r})}{j(j+1)}w^{-j}\right)\ . 
\end{equation}
\end{proof}

\subsection{Two-point function}

Now we consider the two-point function. According to~\cite{Ey11}, since $\dd x$ is a meromorphic $1$-form in $y$, we know that the Laplace transform of the two-point function is represented as a Givental-type edge contribution. So, we have to compute only the even coefficients of the local expansion of half of the two-point function in order to specify the Givental operator imposed by the topological recursion, namely, we are interested in the coefficients of the function
\begin{equation}
Y_{i_1i_2}:=\frac{y_{i_11} y'(z_{i_b})}{(y_{i_1}-y(z_{i_2}))^2}=\frac{\delta_{i_1i_2}}{z^2}+O(1).
\end{equation}

\begin{lemma} \label{lem:2p} We have:
\begin{equation}\label{eq:lem2-1}
\frac{Y_{{i_1}{i_2}}(z_{i_2})+Y_{{i_1}{i_2}}(-z_{i_2})}{2} = -\sum_{k=0}^\infty \frac{(U_k)_{{i_1}{i_2}} (2r)^i z_{i_2}^{2k-2}}{(2k-3)!!}\ , 
\end{equation}
where~$(U_k)_{{i_1}{i_2}}$ is given by 
\begin{equation}\label{eq:lem2-2}
\sum_{k=0}^\infty (U_k)_{{i_1}{i_2}} z^{k} = \frac{1}{r}\sum_{c=0}^{r-1}
\exp\left(-\sum_{k=1}^\infty \frac{B_{k+1}(\frac{c}{r})z^{k}}{k(k+1)}\right)
 \J^{c{i_2}-c{i_1}}\ . 
\end{equation}
\end{lemma}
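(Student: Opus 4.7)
The plan is to Laplace-transform both sides of \eqref{eq:lem2-1} against $e^{-rwz_{i_2}^2}$ and verify that the transforms agree as formal series in $w^{-1/2}$. Using the standard Gaussian moments, extended formally as $\int z^{-2}e^{-rwz^2}dz = -2\sqrt{\pi rw}$ for the $k=0$ pole, the right-hand side Laplace-transforms to $-2\sqrt{\pi r}\,w^{1/2}\sum_{k\geq 0}(U_k)_{i_1 i_2}w^{-k}$, which in view of \eqref{eq:lem2-2} equals $-\tfrac{2\sqrt{\pi}}{r^{1/2}}\,w^{1/2}R_d(w)$, where I write $d := i_1-i_2$, $g(v,w) := \sum_{j\geq 1}\frac{B_{j+1}(v)}{j(j+1)}w^{-j}$, and $R_d(w) := \sum_{c=0}^{r-1}\J^{-dc}\exp(-g(c/r,w))$.

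For the left-hand side I follow the scheme of the proof of Lemma~\ref{lem:y}. Changing variables $z_{i_2}\to y \to \tilde y = y/y_{i_2} \to \tilde t = \tilde y^r$, the factor $(y_{i_1}-y)^{-2}$ becomes $y_{i_2}^{-2}(\J^d-\tilde y)^{-2}$. I expand the pole as the derivative of a geometric series, $(\J^d-\tilde y)^{-2} = \sum_{n\geq 0}(n+1)\J^{-d(n+2)}\tilde y^n$, and substitute $\tilde y = \tilde t^{1/r}$ to obtain an expansion in $\tilde t^{n/r}$. Integrating each summand along the Hankel contour against $e^{-w(1-\tilde t+\log\tilde t)}\tilde t^{1/r-1}$, exactly as in Lemma~\ref{lem:y}, produces the reciprocal Gamma factor $\Gamma(w+1-(n+1)/r)^{-1}$, whose asymptotic expansion involves $B_{j+1}((n+1)/r)$. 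Assembling the constants, using $y_{i_1 1}/y_{i_2} = \I\sqrt{2}\,r^{-1/2}\J^d$, gives
\begin{equation*}
\int \frac{Y_{i_1 i_2}(z_{i_2})+Y_{i_1 i_2}(-z_{i_2})}{2}\,e^{-rwz_{i_2}^2}\,dz_{i_2} = -\frac{2\sqrt{\pi}}{r^{3/2}}\,w^{-1/2}\,S_d ,
\end{equation*}
where $S_d := \sum_{n\geq 0}(n+1)\J^{-d(n+1)}\exp(-g((n+1)/r,w))$.

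The heart of the proof is the algebraic identity $S_d = rw\,R_d$. It rests on the Bernoulli shift $B_{j+1}(v+1)-B_{j+1}(v) = (j+1)v^j$, which sums to $g(v+1,w)-g(v,w) = -\log(1-v/w)$ and hence $\exp(-g(v+1,w)) = (1-v/w)\exp(-g(v,w))$, equivalently
\begin{equation*}
v\exp(-g(v,w)) = w\bigl[\exp(-g(v,w))-\exp(-g(v+1,w))\bigr].
\end{equation*}
Setting $v=(n+1)/r$ (and writing $g_n := g((n+1)/r,w)$) yields $(n+1)\exp(-g_n) = rw[\exp(-g_n)-\exp(-g_{n+r})]$. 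Plugging this into $S_d$ and using $\J^{dr}=1$ to shift indices in the second piece collapses the telescoping series to the finite sum $S_d = rw\sum_{n=0}^{r-1}\J^{-d(n+1)}\exp(-g_n)$. Relabeling $c=r$ as $c=0$, which is valid because $B_{j+1}(1)=B_{j+1}(0)$ for all $j\geq 1$ and $\J^{dr}=1$, gives $S_d = rw\,R_d$, matching the Laplace transform of the right-hand side and thereby proving the lemma.

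The main obstacle is bookkeeping. The series defining $S_d$ is divergent as a numerical sum (for $d=0$ even its leading coefficient $\sum_n(n+1)$ is manifestly divergent), so the telescoping identity and the shift $n\mapsto n-r$ must be interpreted coefficient-by-coefficient in $w^{-k}$. At each order they reduce to polynomial identities in $n$ whose sums against $\J^{-d(n+1)}$ form honest finite telescoping sums (the rapidly growing tail cancels identically after the index shift). Care must also be exercised with signs and contour orientations in the Hankel integrals, but these follow the same conventions already verified to yield the correct prefactor in Lemma~\ref{lem:y}.
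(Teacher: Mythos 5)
Your proposal shares the paper's outer framework (Laplace transform against $e^{-rwz_{i_2}^2}$, passage to the variable $t=ry^r$, Hankel-contour integrals and the reciprocal-Gamma asymptotics already set up for Lemma~\ref{lem:y}), but the key middle step is genuinely different. The paper first integrates by parts, writing $y_{i_11}y'(z_{i_2})(y_{i_1}-y)^{-2}\,dz_{i_2}$ as a derivative and letting the Gaussian produce the factor $rw\cdot 2z_{i_2}$; this reduces the double pole to a simple pole, and then the \emph{finite} algebraic identity $\frac{u^r-v^r}{u-v}=\sum_{c=0}^{r-1}u^cv^{r-1-c}$ applied to $\frac{y^r-\frac1r}{y-y_{i_1}}$ yields exactly $r$ Hankel integrals, one for each $c$, giving the Bernoulli polynomials at $c/r$ directly (the prefactor $rw$ coming for free from the integration by parts). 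You instead keep the double pole, expand $(\J^d-\tilde y)^{-2}$ as an infinite series, transform termwise to get $\Gamma(w+1-(n+1)/r)^{-1}$ for all $n\ge 0$, and then collapse the resulting sum to the finite sum over $c$ via the shift identity $B_{j+1}(v+1)-B_{j+1}(v)=(j+1)v^j$, which also produces the factor $rw$. I checked your bookkeeping: the prefactor $-2\sqrt{\pi}\,r^{-3/2}w^{-1/2}S_d$, the identity $S_d=rwR_d$ (including the relabelling $c=r\mapsto c=0$ using $B_{j+1}(1)=B_{j+1}(0)$ and $\J^{dr}=1$), and the match with the transform of the right-hand side are all correct, so your telescoping identity is a valid substitute for the paper's finite geometric-sum trick at the level of formal series in $w^{-1}$.

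The weak link is the justification of the termwise integration and subsequent rearrangement. The geometric expansion of $(\J^d-\tilde y)^{-2}$ converges only for $|\tilde y|<1$, while the contour in $t$ runs through the region $|t|>1$ and the saddle $t=1$ governing the $w\to\infty$ asymptotics sits exactly on the boundary of convergence; correspondingly the termwise transforms $\Gamma(w+1-(n+1)/r)^{-1}$ grow superfactorially in $n$, so $S_d$ diverges not only coefficientwise but already at the exact level. Your proposed repair is not accurate as stated: the coefficient of each $w^{-k}$ in $S_d$ is still a divergent sum over $n$, and in the telescoping the partial-sum boundary terms $\sum_{n=N+1}^{N+r}\J^{-d(n+1)}c_{k+1}\bigl(\tfrac{n+1}{r}\bigr)$ grow polynomially rather than cancel identically, so one needs an actual summability device (an Abel regulator $q^n$ with $q\to 1^-$, or a split of the contour into $|t|<1$ and $|t|>1$ with a separate expansion on each piece) rather than an ``honest finite telescoping''. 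This can presumably be patched within the formal tolerance of this section, but it is precisely the complication that the paper's integration-by-parts route avoids, since there every sum is finite from the start.
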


\begin{proof} Observe that
\begin{align}
\int_{-\infty}^\infty \left(\frac{y_{{i_1}1} y'(z_{i_2})}{(y_{i_1}-y(z_{i_2}))^2}\right) e^{-rwz_{i_2}^2} \dd z_{i_2} 
& \sim -2\sqrt{\pi} r^{\frac 12}w^{\frac 12} \sum_{k=0}^\infty (U_k)_{{i_1}{i_2}} w^{-k}.
\end{align}

On the other hand,
\begin{align}
& \int_{-\infty}^\infty \frac{y_{{i_1}1} y'(z_{i_2})}{(y_{i_1}-y(z_{i_2}))^2} e^{-rwz_{i_2}^2} \dd z_{i_2}
\\ \notag
& = rw y_{{i_1}1} \int_{-\infty}^\infty \frac{1}{y_{i_1}-y(z_{i_2})} e^{-rwz_{i_2}^2} 2z_{i_2} \dd z_{i_2}
\\ \notag
& = rw y_{{i_1}1} \int_{C_-} \frac{y(z_{i_2})^r - \frac 1r}{y(z_{i_2})-y_{i_1}} e^{-w(1-t+\log t)} \frac{\dd t}{t} \ .
\end{align}
Here we can use that
\begin{align}
 ry_{{i_1}1}\frac{y(z_{i_2})^r - \frac 1r}{y(z_{i_2})-y_{i_1}} &
= r\I\sqrt 2 r^{-\frac 12 -\frac 1r} \J^{i_1}\frac{(t^{\frac 1r} r^{-\frac 1r} \J^{i_2})^r-r^{-1}}
{t^{\frac 1r} r^{-\frac 1r} \J^{i_2} - r^{-\frac 1r} \J^{i_1}} \\ \notag
& = \I\sqrt 2 r^{-\frac 12} \sum_{c=0}^{r-1} \J^{c{i_2}-c{i_1}} t^{\frac cr}\ .
\end{align}
Therefore,
\begin{align}
& \int_{-\infty}^\infty \frac{y_{{i_1}1} y'(z_{i_2})}{(y_{i_1}-y(z_{i_2}))^2} e^{-rwz_{i_2}^2} \dd z_{i_2}
\\ \notag
& = \I\sqrt 2 r^{-\frac 12} we^{-w} \sum_{c=0}^{r-1} 
\J^{c{i_2}-c{i_1}}
\int_{C_-} e^{wt}t^{-w-1+\frac cr} \dd t 
\\ \notag
& =  -\I\sqrt 2 r^{-\frac 12} w^{w+1}e^{-w} \sum_{c=0}^{r-1} 
\J^{c{i_2}-c{i_1}} w^{-\frac cr}
\int_{C_+} e^{-p}(-p)^{-w-1+\frac cr} \dd p
\\ \notag
& \sim -2\sqrt \pi r^{-\frac 12} w^{\frac 12} \sum_{c=0}^{r-1} 
\J^{c{i_2}-c{i_1}} \exp \left(-\sum_{n=1}^\infty \frac{B_{n+1}(\frac cr)}{n(n+1)} w^{-c} \right) \ .
\end{align}

Thus we see that
\begin{equation}
\sum_{k=0}^\infty (U_k)_{{i_1}{i_2}} z^{k} = \frac{1}{r}\sum_{c=0}^{r-1}
\exp\left(-\sum_{k=1}^\infty \frac{B_{k+1}(\frac{c}{r})z^{k}}{k(k+1)}\right)
 \J^{c{i_2}-c{i_1}}\ . 
\end{equation}
\end{proof}

\subsection{Functions on the leaves}

According to~\cite{DOSS12}, the auxilliary function $\xi_i(x)$ that we put on the leaves in the graph expression for the correlation forms of the spectral curve are given by the following formula:
\begin{equation}
\xi_i:= \frac{\I \sqrt 2 r^{-\frac{1}{2} -\frac{1}{r}} \J^i}{r^{-\frac 1r} \J^i-y}.
\end{equation}
Here the index $i$ corresponds to the basis of normalized idempotents, so in the standard flat  basis we have to consider the functions $\tilde\xi_a$ given by Equation~\eqref{eq:xi-a}:
\begin{equation}
\tilde \xi_a := \sum_{i=0}^{r-1} r^{-1} \J^{-(a+1)i} \xi_i \  .
\end{equation}

\begin{lemma}\label{lem:leaves} We have:
\begin{equation}
\tilde \xi_a = \I \sqrt 2 r^{\frac 12 -\frac{a+1}r} \sum_{n=0}^\infty \frac{ (rn+r-a-1)^{n}}{n!} e^{(rn+r-a-1)x}\ .
\end{equation}
\end{lemma}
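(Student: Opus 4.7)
The plan is to first simplify $\tilde\xi_a$ to a closed rational expression in $y$, and then apply Lagrange inversion to the Lambert-like relation $e^x = y e^{-y^r}$ to expand that rational function in powers of $e^x$.

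First I would expand each $\xi_i$ as a geometric series near $y=0$. Writing $\alpha = r^{-1/r}$, one sees after cancelling a factor $\J^i/\J^i$ that $\xi_i = \I\sqrt{2}\,r^{-1/2}\sum_{k\geq 0}(y\J^{-i}/\alpha)^k$. Substituting into $\tilde\xi_a = r^{-1}\sum_i \J^{-(a+1)i}\xi_i$ and using the orthogonality $\sum_{i=0}^{r-1}\J^{-mi} = r$ if $r\mid m$ and $0$ otherwise, only $k$ with $k+a+1\equiv 0\pmod r$ survives, i.e.\ $k = rn + r-a-1$. Bookkeeping the fractional powers of $r$ then gives
\[
\tilde\xi_a \;=\; \I\sqrt{2}\,r^{1/2-(a+1)/r}\sum_{n\geq 0} r^n\,y^{rn+r-a-1} \;=\; \I\sqrt{2}\,r^{1/2-(a+1)/r}\,\frac{y^{r-a-1}}{1-ry^r}.
\]

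Next, differentiating the curve equation yields $dx/dy = (1-ry^r)/y$, so the rational factor is a derivative in $x$: indeed $y^{r-a-1}/(1-ry^r) = y^{r-a-2}\,dy/dx$. For $a\neq r-1$ this equals $\tfrac{1}{r-a-1}\tfrac{d}{dx}(y^{r-a-1})$; for $a=r-1$ it equals $d\log y/dx$, and since $\log y = x+y^r$ on the curve, it equals $1 + d(y^r)/dx$. In either case the task reduces to expanding a single power $y^k$ as a series in $q := e^x$. Classical Lagrange--B\"urmann applied to $y = q\,e^{y^r}$ gives, for any integer $k\geq 1$,
\[
y^k \;=\; \sum_{m\geq 0}\frac{k(k+rm)^{m-1}}{m!}\,e^{(k+rm)x}.
\]
In the generic case one takes $k = r-a-1$, divides by $r-a-1$, and differentiates in $x$: the derivative multiplies the $m$-th coefficient by $rm+r-a-1$ and cancels the factor $r-a-1$, producing precisely $(rn+r-a-1)^n/n!$ as the coefficient of $e^{(rn+r-a-1)x}$. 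In the exceptional case $a = r-1$, the same formula with $k=r$ gives the classical Lambert-W expansion $y^r = \sum_{n\geq 1}(rn)^{n-1}/n!\cdot e^{rnx}$; differentiating and then adding $1 = (r\cdot 0)^0/0!$ (with the convention $0^0 = 1$) reconstructs $\sum_{n\geq 0}(rn)^n/n!\cdot e^{rnx}$, which is the claimed series with $k_n = rn$. Multiplying by the prefactor from Step~1 completes the proof.

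I do not anticipate a real obstacle here: once the discrete Fourier reduction in the first step has been carried out, the identity is essentially the Lagrange--B\"urmann formula for the Lambert-W type equation $y = q\,e^{y^r}$. The only care required is to track correctly the fractional powers of $r$ and the roots of unity through the Fourier sum, and to handle separately the boundary value $a = r-1$ where the naive antiderivative $\int y^{r-a-2}\,dy$ degenerates to a logarithm.
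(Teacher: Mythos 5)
Your proposal is correct and follows essentially the same route as the paper: the same reduction of $\tilde\xi_a$ (geometric series plus roots-of-unity filtering, versus the paper's direct summation) to $\I\sqrt2\, r^{\frac12-\frac{a+1}r}\,\frac{y^{r-a-1}}{1-ry^r}$, then the same recognition of this as $\frac{1}{r-a-1}\frac{\dd}{\dd x}\,y^{r-a-1}$ and expansion via Lagrange inversion for $e^x=ye^{-y^r}$ — the paper phrases that last step through the Lambert-function series of Corless et al., which is the same identity. The only real difference is that you treat the degenerate case $a=r-1$ separately (via $\log y = x+y^r$ and the convention $0^0=1$), a case the paper's computation, which carries and then cancels the factor $r-a-1$, passes over silently; that is a welcome extra bit of care but does not change the argument.
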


\begin{proof} First, observe that 
\begin{align}
\tilde \xi_a & = \sum_{i=0}^{r-1} r^{-1} \J^{-(a+1)i} \xi_i 
=
\I \sqrt 2 r^{-1-\frac 12} \sum_{i=0}^{r-1}  \frac{\J^{-(a+1)i}}{1-r^{\frac 1r} \J^{-i} y}
\\ \notag 
& = \I \sqrt 2 r^{-\frac 12} \left(\frac{r^{\frac {r-a-1}r} y^{r-a-1} }{1-r y^r} \right).
\end{align}

Following~\cite{CGHJK96}, we define the Lambert function
\begin{equation}
W(z):=-\sum_{n=1}^\infty \frac{n^{n-1}}{n!} (-z)^n \ ,
\end{equation}
and use its property
\begin{equation}
\left(\frac{W(z)}{z}\right)^\alpha=\sum_{n=0}^\infty \frac{\alpha (n+\alpha)^{n-1}}{n!} (-z)^n \ .
\end{equation}
We have the following equation: $e^x=ye^{-y^r}$. This equation implies (cf.~\cite{MSS})
\begin{equation}
y=\left(\frac{W(-re^{rx})}{-r}\right)^{\frac 1r}
\end{equation}
and
\begin{equation}
\frac{\dd y^{r-a-1}}{\dd x} = \frac{(r-a-1)y^{r-a-1}}{1-ry^r} \ .
\end{equation}
Therefore,
\begin{align}
& \frac{(r-a-1)y^{r-a-1}}{1-ry^r} = \frac{\dd}{\dd x} \left(\frac{W(-re^{rx})}{-r}\right)^{\frac {r-a-1}r}
\\ \notag
& = (-r)^{-\frac {r-a-1}r} \frac{\dd}{\dd x} (-re^{rx})^{\frac {r-a-1}r} 
\sum_{n=0}^\infty \frac{\frac {r-a-1}r (n+\frac {r-a-1}r)^{n-1}}{n!} (re^{rx})^n 
\\ \notag
&= (r-a-1)  \frac{\dd}{\dd x}
\sum_{n=0}^\infty \frac{ (rn+r-a-1)^{n-1}}{n!} e^{(rn+r-a-1)x}
\\ \notag
& = (r-a-1) \sum_{n=0}^\infty \frac{ (rn+r-a-1)^{n}}{n!} e^{(rn+r-a-1)x}
\end{align}
So, we see that
\begin{equation}
\tilde \xi_a = \I \sqrt 2 r^{\frac 12-\frac{a+1}r} \sum_{n=0}^\infty \frac{ (rn+r-a-1)^{n}}{n!} e^{(rn+r-a-1)x}
\end{equation}
\end{proof}

\subsection{Comparison of the correlators}

Consider the $n$-point correlators $\la 
\tau_{d_1}^{a_1} \cdots \tau_{d_n}^{a_n}
\ra_{\!g}^{\!\widetilde{\mathrm{t.r.}}} $ introduced in Equation~\eqref{eq:n-pt-function-1}.
They are obtained via a linear change of the indices from the correlators $\la 
\tau_{d_1}^{i_1} \cdots \tau_{d_n}^{i_n}
\ra_{\!g}^{\!\mathrm{t.r.}}$, where the latter ones are defined in~\cite{DOSS12} as the sum over Givental graphs. The structure constants of these graphs, that is, the parameters that we put on vertices, edges, leaves, and dilaton leaves, are defined in terms of local data of the curve $x=-y^r+\log y$ at the ramification points. More precisely, they are defined via the coefficients of the expansions of the function $y(z_i)$ in the local coordinate $z_i$ and the components of the Bergman kernel $Y_{i_1i_2}$ in the local coordinate $z_{i_2}$. 

In this Section we prove that the correlators  $\la\tau_{d_1}^{a_1} \cdots \tilde\tau_{d_n}^{a_n}
\ra_{\!g}^{\widetilde{\!\mathrm{t.r.}}}$
are equal, up to some multiplicative factors, to the correlators $\la 
\tau_{d_1}^{a_1} \cdots \tau_{d_n}^{a_n}
\ra_{\!g}^{\widetilde{\!\mathrm{coh}}}$ of the cohomological field theory described in Section~\eqref{Ssec:CohFT}.  Namely, we prove the following lemma.

\begin{lemma} \label{lem:mainlemma}
\begin{align}
& \sum_{
\begin{smallmatrix}
i_1,\dots,i_\ell
\end{smallmatrix}}
\la 
\tau_{d_1}^{i_1} \cdots \tau_{d_n}^{i_n}
\ra_{\!g}^{\!\mathrm{t.r.}}
\prod_{k=1}^n r^{\frac 12} \J^{(a_k+1)i_k} 
\prod_{k=1}^n \I \sqrt 2 r^{\frac{-(a_k+1)}r} (-2)^{d_k}
\\ \notag
& =\la
\tau_{d_1}^{a_1} \cdots \tau_{d_n}^{a_n}
\ra_{\!g}^{\!\widetilde{\mathrm{coh}}}
r^{2g+2n-2+\frac{2g-2-\sum_{k=1}^n a_k}{r}-\sum_{k=1}^n d_i}
\end{align}
\end{lemma}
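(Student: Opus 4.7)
The plan is to apply the DOSS12 graph formula for $\la \tau_{d_1}^{i_1}\cdots\tau_{d_n}^{i_n}\ra^{\mathrm{t.r.}}_g$ in the basis of normalized idempotents, then perform the Fourier-type change of basis
$e_a \leftrightarrow \sum_i r^{1/2}\mathbb{J}^{(a+1)i} e_i$
of Section~\ref{Ssec:CohFT} on each external index, and verify that the graph sum becomes exactly the Givental-graph expansion of $\la\tau_{d_1}^{a_1}\cdots\tau_{d_n}^{a_n}\ra^{\widetilde{\mathrm{coh}}}_g$ (up to the explicit scalar on the right-hand side). The verification splits according to the local ingredients of the graph: vertices (TFT data), internal edges ($R$-matrix via the Bergman kernel), dilaton leaves (also $R$-matrix), and external leaves (the differentials $\xi_i$, already handled in Lemma~\ref{lem:leaves}).

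First I would identify the topological field theory produced by the spectral curve at leading order. By construction the leading term of $Y_{i_1i_2}$ in Lemma~\ref{lem:2p} is $\delta_{i_1i_2}z^{-2}$, so in the idempotent basis $(e_i)$ the associated TFT is the diagonal direct sum of $r$ rescaled one-dimensional TFTs with factors coming from $y_{i1}$. After the basis change $e_a = r^{1/2}\sum_i \mathbb{J}^{(a+1)i}e_i$, the Fourier transform $\frac{1}{r}\sum_i \mathbb{J}^{(a+b)i} = \delta_{a+b\equiv 0\bmod r}$ immediately produces the pairing $\eta_{ab} = r^{-1}\delta_{a+b+2\equiv 0 \bmod r}$ and three-point structure constants $\la e_ae_be_c\ra = r^{-1}\delta_{a+b+c+2\equiv 0\bmod r}$ of $\omega_{g,n}$; the requisite power of $r$ per component is collected in the factor $r^{2g-1}$ on a genus-$g$ closed piece and gives the $r^{2g+2n-2+(2g-2-\sum a_k)/r}$ factor when summed over all graph components as dictated by Euler characteristic.

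Next I would match the $R$-matrix. By Lemma~\ref{lem:2p}, the matrix $U(z)_{i_1i_2} = \frac{1}{r}\sum_{c=0}^{r-1}\exp\bigl(-\sum_{k\geq 1}\frac{B_{k+1}(c/r)}{k(k+1)}z^k\bigr)\mathbb{J}^{c(i_2-i_1)}$ on edges is precisely the expansion in the idempotent basis of the diagonal matrix
\[
R(z) = \exp\Bigl[-\sum_{k\geq 1}\frac{\mathrm{diag}_{a=0}^{r-1}B_{k+1}(\frac{a+1}{r})}{k(k+1)}z^k\Bigr]
\]
of Section~\ref{sec:Omega}: the character sum $\frac{1}{r}\sum_{i_1,i_2}\mathbb{J}^{(a_1+1)i_1+(a_2+1)i_2}\mathbb{J}^{c(i_2-i_1)}$ forces $c \equiv a_1+1 \equiv -(a_2+1)\bmod r$, making $U$ diagonal in the flat basis with exactly the Chiodo eigenvalue $\exp(-\sum B_{k+1}(\tfrac{a+1}{r})z^k/k(k+1))$. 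By Lemma~\ref{lem:y}, the odd expansion of $y(z_i)$, which in the DOSS12 recipe produces the dilaton-leaf/Givental-action weights acting on the one-dimensional sectors, yields exactly the same $B_{k+1}(1/r)$-series, and the Fourier transform spreads these appropriately over the flat-basis sectors. Thus the $R$-matrix on the topological-recursion side agrees with the Chiodo $R$-matrix of $\Omega_{g,n}$ after the basis change.

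Finally I would reconcile the explicit scalar prefactors. Each external leaf contributes $\mathbb{I}\sqrt{2}\,r^{-(a_k+1)/r}$ from $\tilde\xi_{a_k}$ (cf.\ Lemma~\ref{lem:leaves}) and the extra $r^{1/2}$ from the change-of-basis coefficient; each insertion of $\psi^{d_k}$ gains a factor $(-2)^{d_k}$ because of the substitution $z^2 = -2(x-x_i)/\text{(leading coef.)}$ appearing in the DOSS12 derivative $(-2\partial/\partial x)^{d_k}$. The dilaton-leaf contributions combine with the vertex weights to produce, via the Euler-characteristic count of the graph, precisely the remaining factor $r^{2g+2n-2+(2g-2-\sum a_k)/r-\sum d_k}$. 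Collecting everything yields the claimed identity. The main obstacle is the bookkeeping of all the $\mathbb{I}\sqrt{2}$, sign, and $r$-power factors and the consistent tracking of the $1/r$ normalizations of the TFT across arbitrary graph topologies; the key conceptual input, namely the identification of the $R$-matrix, follows cleanly from comparing Lemmas~\ref{lem:y} and~\ref{lem:2p} with Chiodo's formula.
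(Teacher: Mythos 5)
Your proposal follows essentially the same route as the paper's own proof: pass to the basis of normalized idempotents, invoke the graph identification of~\cite{DOSS12}, match the topological field theory and the $R$-matrix by comparing Lemmas~\ref{lem:y} and~\ref{lem:2p} with Chiodo's formula, handle the leaves via Lemma~\ref{lem:leaves}, and accumulate the remaining powers of $r$ vertex-by-vertex through an Euler-characteristic count. The paper's proof is exactly this comparison carried out explicitly (the per-vertex factor computed from the dimension constraint at each vertex together with the fact that the local Euler characteristics sum to $2g+n-2$), so what your sketch leaves as ``bookkeeping'' is precisely the computation the paper performs, and it does close as you assert.
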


\begin{proof} The proof follows from the comparison of the ingredients of the Givental graph expressions on both sides, following the identification theorem in~\cite{DOSS12}. Let us rewrite both sides of the equality in the basis of normalized idempotents:
\begin{align} \label{eq:ncf}
& 
\la 
\tau_{d_1}^{i_1} \cdots \tau_{d_n}^{i_n}
\ra_{\!g}^{\!\mathrm{t.r.}}
\prod_{k=1}^n (-2r)^{d_k+\frac 12}
=\la
\tau_{d_1}^{i_1} \cdots \tau_{d_n}^{i_n}
\ra_{\!g}^{\!\mathrm{coh}}
r^{2g+n-2+\frac{2g+n-2}{r}},
\end{align}
where
\begin{equation}
\la
\tau_{d_1}^{i_1} \cdots \tau_{d_n}^{i_n}
\ra_{\!g}^{\!\mathrm{coh}}:=
\sum_{
\begin{smallmatrix}
a_1,\dots,a_n
\end{smallmatrix}}
\la 
\tau_{d_1}^{a_1} \cdots \tau_{d_n}^{a_n}
\ra_{\!g}^{\!\widetilde{\mathrm{coh}}}
\prod_{j=1}^n \J^{-(a_j+1)i_j} 
\end{equation}

The result of Chiodo (see Theorem~\ref{Thm:Chiodo}) implies that the generating function of the correlators $\la
\tau_{d_1}^{i_1} \cdots \tau_{d_n}^{i_n}
\ra_{\!g}^{\!\mathrm{coh}}$ is obtained from the $r$ properly normalized copies of the Kontsevich-Witten tau function by application of the quantization of the operator
\begin{equation}
R_{i}^j(\zeta):=\exp\left(-\sum_{k=1}^\infty
\frac{\zeta^k}r  \sum_{a=0}^{r-1} 
\J^{ai-aj} 
\frac{B_{k+1}\left(\frac{a}r\right)}{k(k+1)} \right) \ .
\end{equation}
In particular (we use it below), we have:
\begin{align}
R_{i}^\mathbf{1}(\zeta)& :=\sum_{j=0}^{r-1} \sum_{a=0}^{r-1} \frac{\J^{j}}{r} \frac{\J^{ai-aj}}{r}  \exp\left(-\sum_{k=1}^\infty
\zeta^k  
\frac{B_{k+1}\left(\frac{a}r\right)}{k(k+1)} \right) \\ \notag
& =\frac{\J^{i}}{r}  \exp\left(-\sum_{k=1}^\infty
\zeta^k  
\frac{B_{k+1}\left(\frac{1}r\right)}{k(k+1)} \right) \ .
\end{align}

The weight of the correlators $\left< \prod_{i=1}^p \tau_{a_i}\right>_q$ of the $i$-th copy of the Kontsevich-Witten tau function (or, in other words, the weight of the vertex labelled by $i$ in the graphical representation of the Givental formula, as in~\cite{DOSS12}) is equal to 
\begin{equation}\label{eq:weight}
r^{2q-1}\!\!\!\!\!\!\!\!\!\!\! \sum_{
\begin{smallmatrix}
a_1,\dots, a_p:\\
r|2g-2-a_1+\cdots+a_p
\end{smallmatrix}
}
\prod_{j=1}^p \J^{-(a_j+1)i}
=
r^{2q+p-2}\J^{-(2q+p-2)i}\ .
\end{equation}

Let us compare that with the formula we get from the topological recursion, following the lines of~\cite{DOSS12}. We compare the coefficients of the expansion of $y$ and the two-point function in the coordinate $z_i$, $i=0,\dots,r-1$ (that determine the ingredients of the graphs in the formula for $\la 
\tau_{d_1}^{i_1} \cdots \tau_{d_n}^{i_n}
\ra_{\!g}^{\!\mathrm{t.r.}}$) with the corresponding formulas in terms of the operator $R_i^j(\zeta)$ that are used in the Givental graphical formula for $\la
\tau_{d_1}^{i_1} \cdots \tau_{d_n}^{i_n}
\ra_{\!g}^{\!\mathrm{coh}}$, in the same way as it is done in~\cite[Theorem 4.1]{DOSS12}. 

Lemma~\ref{lem:y} implies that, in the notation of~\cite{DOSS12},
\begin{align}
\check h_{k+1}^i & = \I 2 \sqrt 2 r^{-\frac{1}{2} -\frac{1}{r}} \J^i (2r)^k [\zeta^k]
\exp\left(-\sum_{i=1}^\infty \frac{B_{i+1}(\frac{1}{r})}{i(i+1)}\zeta^i\right)  \\ \notag
& =  \I \sqrt {2r} r^{-1-\frac{1}{r}} (-2r)^{k+1} [\zeta^k]
\left(-R_{i}^\mathbf{1}(-\zeta)\right)\ .
\end{align}
Lemma~\ref{lem:2p} implies that, also in notation of~\cite{DOSS12},
\begin{align}
\check B^{ji}_{0,k} & =-(2r)^{k+1} [\zeta^{k+1}] R_i^j(\zeta) =
 (-2r)^{k+1} [\zeta^k] \left(\frac{1-R(-\zeta)}{\zeta}\right).
\end{align}
The vertex labelled by $\langle \prod_{i=1}^p \tau_{a_i}\rangle_q$ and an extra index $i$ is also multiplied by (again in the notation of~\cite{DOSS12})
\begin{equation}
(-2h_1^i)^{2-2q-p}=(-\I 2\sqrt 2 r^{-\frac 12 -\frac 1r} \J^i)^{2-2q-p}.
\end{equation}

This all together (including the factors $(-2r)^{d_n+\frac 12}$ that we have on the global leaves in Equation~\eqref{eq:ncf}) gives the following extra factor for the  vertex labelled by $\langle \prod_{i=1}^p \tau_{a_i}\rangle_q$, with $p_d$ attached dilaton leaves and $p_o$ ordinary leaves and/or half-edges ($p=p_d+p_o$), and an extra index $i$:
\begin{align}\label{eq:coeff}
\frac{(-2r)^{a_1+\cdots+a_p} (\I\sqrt{2r})^p r^{-p_d-\frac {p_d}r} }{(-\I 2\sqrt 2 r^{-\frac 12 -\frac 1r} \J^i)^{2q+p-2}}
=
r^{(1+\frac 1r)(2q+p_o-2)}r^{(2q+p-2)} \J^{(2q+p-2)i}
\end{align}
(we used that $a_1+\cdots+a_p=3q-3+p$). Note that the factor $r^{(2q+p-2)} \J^{(2q+p-2)i}$ coinsides with the weight that we have in the Givental's presentation of Chiodo's formula, cf. Equation~\eqref{eq:weight}.
Meanwhile, the sum of the exponents $2q+p_o-2$ over all vertices in a graph of genus $g$ with $n$ global leaves is equal to $2g+n-2$. Therefore, the product of the factors $r^{(1+\frac 1r)(2q+p_o-2)}$ over all vertices of a graph is exactly equal to the extra factor $r^{2g+n-2+\frac{2g+n-2}{r}}$ that we have on the right hand side of Equation~\eqref{eq:ncf}.
\end{proof}



\section{Completed Hurwitz numbers as a matrix model} \label{sec:HurMM}

In this section we state and prove a theorem, representing the partition function for completed Hurwitz numbers as a matrix integral, that is shown in Section~\ref{Sec:Evidence} to lead to strong evidence for the $r$-BM conjecture. Both the theorem presented in this section and the resulting evidence in the next section are based on a direct generalization of one of the proofs of the original Bouchard-Mari\~no conjecture in~\cite{BEMS}. Unfortunately, it seems that this proof is not completely rigorous, meaning that we can only present the generalization to completed Hurwitz numbers as evidence for the $r$-BM conjecture. 

\subsection{The statement} 
Let~$Z$ be the partition function for Hurwitz numbers with completed $(r+1)$-cycles:
\begin{equation}\label{eq:Z}
Z:= \exp\left(\sum_{g=0}^\infty \frac{g_s^{2g-2}}{m!}\sum_{n\geq 1} \frac{1}{n!}\sum_{k_1, \ldots, k_n} h_{g,r;k_1,\ldots,k_n}\prod_{i=1}^n p_{k_i} \right) .
\end{equation}
The character formula for not necessarily connected Hurwitz numbers allows us to write
\begin{align}
Z(\p,g_s;t) = \sum_{K=0}^{\infty} t^K 
& \sum_{|\mu| = K} \sum_{m=0}^{\infty}  \frac{g_s^{rm-K-l(
\mu)}}{m!}p_{\mu}\times 
\\ \notag &
\sum_{|\lambda| = K} \left(\frac{\dim(\lambda)}{n!}\right)^2\frac{|C_{\mu}|\chi_{\lambda}(\mu)}{\dim(\lambda)}\left(\frac{\p_{r+1}(\lambda)}{(r+1)}\right)^m.
\end{align}
Here $rm-K-|\mu|$ is equal to the Euler characteristic of the curve (by the Riemann-Hurwitz formula), $\lambda$ and $\mu$ are partitions of $K$ encoding an irreducible representation and a conjugacy class $C_\mu$ respectively, and $p_\mu = p_{k_1} \cdots p_{k_n}$ for $\mu = (k_1, \dots, k_n)$. For every $\mu$ the coefficient of $p_\mu$ in this expression is a formal Laurent series in $g_s$ with a finite number of negative degree terms. 

Note that we have inserted an extra formal variable~$t$ to encode the degree of the covering. It is redundant, since the degree can also be recovered from the total degree in~$\mathbf{p}$, but turns out to be convenient later on.

Fix a positive integer $N$. We use the following substitution for the variables $p_k$, $k=1,2,\dots$, as symmetric functions:
\begin{equation}
p_k = g_s \sum_{i=1}^N v_i^k.
\end{equation} 
Moreover, introduce the $N$-tuple of variables $\mathbf{v} := (v_1, \ldots, v_N)$ and the diagonal matrix of their formal logarithms $\mathbf{R}:= \diag(\log v_1, \ldots \log v_N)$. We use $\Delta$ to denote the Vandermonde determinant: 
\begin{equation}
\Delta(\mathbf{v}) := \prod_{1 \leq j < i \leq N}(v_i - v_j) , 
\end{equation}
and similarly for~$\Delta(\mathbf{R})$, where we take the Vandermonde determinant of its diagonal entries.

Let $B_k$ be the Bernoulli numbers, and introduce the following functions depending also on the parameter~$N$:
\begin{align} \label{Eq:A}
A_{r+1}(x) & = \sum_{k=0}^{r+1}\left(r!\frac{(-N+\frac{1}{2})^k}{k!}\frac{x^{r+1-k}}{(r+1-k)!} \right.
\\ \notag & {\ } \qquad
\left.
+ (-1)^{r+1} r!\frac{(-1)^k B_k}{k!} \frac{(2^{k-1} - 1)}{2^{k-1}}\frac{N^{r+1-k}}{(r+2-k)!}\right) ;
\end{align}
\begin{align}\label{eq:potential}
V(x)  = & -g_s^{r+1} A_{r+1}(x/g_s) + g_s\log(g_s/t)A_1(x/g_s) 
\\ \notag
& + \I\pi x - g_s \log(\Gamma(-x/g_s)) +\I\pi g_s.
\end{align}
The function~$A_{r+1}$ is just a function of~$x$, whereas~$V$ is a function of~$x$ that also depends on the variables $g_s$ and~$t$ that live on~$\C\backslash (-\infty, 0)$. These functions originate from the combinatorics of Young diagrams, their meaning will be explained later in this section.

Let $\cC_D$ be a fixed contour in the complex plane that goes around the integers~$h$ with $0 \leq h \leq D$. Let $\mathcal{H}_N(\mathcal{C}_D)$ be the space of $N$ by~$N$ 
\emph{normal matrices} $M$ with eigenvalues in~$\mathcal{C}_D$. In other words, $M \in \mathcal{H}_N(\mathcal{C}_D)$ if and only if $M$ can be diagonalized by conjugation with a unitary matrix, and its eigenvalues belong to~$\mathcal{C}_D$:
\begin{equation}
M = U^\dagger X U , \quad U \in U(N), \quad X = \diag(x_1, \ldots, x_N), \quad x_i \in \mathcal{C}_D.
\end{equation}

We use the following measure on~$\mathcal{H}_N(\mathcal{C}_D)$:
\begin{equation}
\dd M = \Delta(X)^2 \dd X \dd U,
\end{equation}
where $\dd U$ is the Haar measure on $U(N)$ and $\dd X$ is the product of Lebesgue curvilinear measures along~$\mathcal{C}_D$. 

Now we formulate the main theorem of this section.

\begin{theorem} \label{thm:Zmatrix} We have:
\begin{equation}\label{eq:Zmatrix}
Z(\mathbf{p}, g_s;t) \sim \frac{g_s^{-N^2}}{N!} \frac{\Delta(\mathbf{R})}{\Delta(\mathbf{v})} \int_{\mathcal{H}_N(\mathcal{C}_D)} \dd M e^{- \frac{1}{g_s} \Tr (V(M) - M\mathbf{R})}.
\end{equation}
Notice that the left-hand side is a formal Laurent series in $\mathbf{p}, g_s, t$, whereas the right-hand side is a meromorphic function of those variables, defined for $t$ and~$g_s$ on the domain~$\C\backslash (-\infty, 0)$, that also depends on two parameters $N$ and~$D$. The symbol~$\sim$ means that for any~$K$, the coefficient of $t^K$ on the left-hand side is given by the coefficient of $t^K$ in the series expansion around $t=0$ of the function on the right-hand side for any choice of the parameters such that $N > K$ and $D > K + \frac{N-1}{2}$. 
\end{theorem}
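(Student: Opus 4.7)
The approach is to start from the character-formula expansion of $Z$ given just before the theorem and convert the sum over partitions into the matrix integral by a sequence of essentially standard manipulations: Frobenius reciprocity (to turn the character sum into Schur polynomials), the Jacobi--Trudi / Weyl determinantal formula (to express Schur polynomials via shifted coordinates $h_i = \lambda_i + N - i$), a contour-integral representation of the discrete sum over $h_i$ (to pass from partitions to a continuous normal-matrix eigenvalue integral), and the Harish-Chandra--Itzykson--Zuber formula (to reinstate the integration over the unitary group).

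Concretely, the first step is to apply the identity
$$
\sum_{|\mu|=K} \frac{|C_\mu|\,\chi_\lambda(\mu)}{K!}\, p_\mu \;=\; s_\lambda(\mathbf{p})
$$
and then substitute $p_k = g_s \sum_{i=1}^N v_i^k$. By Weyl's character formula, $s_\lambda(\mathbf{v}) = \det(v_j^{h_i})/\Delta(\mathbf{v})$ and $\dim(\lambda)/K! = \Delta(\mathbf{h})/\prod h_i!$. The Kerov--Olshanski result recalled in Section~\ref{Sec:CompCyc} ensures that $\p_{r+1}(\lambda)/(r+1)$ is a shifted symmetric polynomial in the $h_i$, in fact a sum $\sum_i$ of a single-variable polynomial of degree $r+1$; modulo a constant coming from the reference ``$-i+\tfrac12$'' part of $\p_{r+1}$, this is precisely the polynomial $A_{r+1}$ of \eqref{Eq:A} (the Bernoulli--number piece encodes the correct symmetrization around $-N+\tfrac12$). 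Exponentiating the $\p_{r+1}$ insertion and summing over $m$ therefore produces a factor $\prod_i \exp(g_s^{r-1} A_{r+1}(g_s h_i)/\text{const})$, i.e.\ the $A_{r+1}$ term of the potential $V$ in \eqref{eq:potential}.

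The next step is to replace the discrete sum $\sum_{\lambda}\sum_m$ by a contour integral. Each factor $1/h_i!$ is realised, via the Hankel-contour formula for $1/\Gamma$, as the residue of $-\log\Gamma(-x/g_s)$ around the nonnegative integer $x=g_s h_i$; this is exactly the origin of the $\Gamma$-piece of $V$ and of the $\I\pi x + \I\pi g_s$ phase shifts coming from the branch of $\log(-x)$. The factor $t^K$ produces the $g_s \log(g_s/t)A_1(x/g_s)$ piece (since $A_1(h) = h - N + \tfrac12$ shifts the sum $\sum h_i$ to $|\lambda| = K$ up to an $N$-dependent normalization that contributes to the prefactor $g_s^{-N^2}$). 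After these identifications, the summand organizes into
$$
\frac{1}{N!}\,\Delta(\mathbf{h})^2 \,\frac{\det(v_j^{h_i})}{\Delta(\mathbf{v})}\,\prod_i e^{-V(g_s h_i)/g_s}
$$
integrated over $N$ copies of $\cC_D$, the constraints $N > K$ and $D > K + (N-1)/2$ being exactly what is needed so that $h_i = \lambda_i + N - i$ is strictly ordered, nonnegative, and inside $\cC_D$ for every partition of $K$ contributing to the $t^K$-coefficient. Finally, the HCIZ formula
$$
\int_{U(N)} e^{\Tr(XU\mathbf{R}U^\dagger)/g_s}\,dU
\;=\; (\text{const})\,\frac{\det(e^{x_i R_j/g_s})}{\Delta(X)\,\Delta(\mathbf{R})}
$$
rewrites $\det(v_j^{h_i}) = \det(e^{h_i R_j})$ as a unitary group integral, and Weyl integration on $\mathcal{H}_N(\cC_D)$ reassembles the eigenvalue integral and the unitary integral into the matrix integral on the right-hand side, producing the prefactor $g_s^{-N^2}\Delta(\mathbf{R})/\Delta(\mathbf{v})$.

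The step I expect to be the main obstacle is the bookkeeping of the normalization constants and phases: tracking all the $(2\pi \I)^{\pm 1}$, the $\pm$ signs coming from the reflection $B_j(1-v) = (-1)^j B_j(v)$, the shifts by $-N + \tfrac12$ that redistribute between $A_{r+1}$ and the Gaussian-like part of $V$, and the $N^2$ factors that absorb the difference between the ``product of $h_i!$'' and ``Vandermonde-squared'' normalizations. Equally delicate is the precise meaning of ``$\sim$'': because the left side is a formal series while the right side is a meromorphic function of $t$ and $g_s$, one has to argue that when $N>K$ the contribution of partitions of size $>K$ does not leak into the $t^K$ coefficient of the contour integral, and that no additional residues from $\Gamma$-poles outside $[0,D]$ are picked up, i.e.\ that $D > K+(N-1)/2$ suffices to catch exactly the $h_i$ arising from partitions of size $\le K$. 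Once these inequalities are verified, coefficient-by-coefficient identification in $t$ gives the theorem.
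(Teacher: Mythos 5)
Your proposal follows essentially the same route as the paper: character formula to Schur polynomials, shifted coordinates $h_i=\lambda_i-i+N$ with the explicit computation of $A_{r+1}$ from $\p_{r+1}$, replacement of the discrete sums by residues/contour integrals over $\cC_D$ (with the same truncation conditions $N>K$, $D>K+\tfrac{N-1}{2}$), and finally the Itzykson--Zuber formula plus Weyl integration to reassemble the eigenvalue integral into the normal-matrix integral. Apart from minor bookkeeping imprecisions (e.g.\ the exact power of $g_s$ multiplying $A_{r+1}$ before rescaling), this matches the paper's proof.
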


\begin{remark}\label{Rem:Zcontour}
The form of Theorem~\ref{thm:Zmatrix} differs from that of the analogous statement in~\cite{BEMS}; here the contour is around a finite set of integers, whereas in~\cite{BEMS} it goes around all non-negative integers. According to our understanding, the contour should be finite in both cases, since the integral over the infinite contour does not converge to a meromorphic function, which makes it impossible to have an expansion for it in powers of~$t$. See Remark~\ref{Rem:ZvsZD} for a more precise discussion of the origin of this problem. Note that this is one of the reasons we are not able to convert the evidence in the next section into a formal theorem (see Section~\ref{SSec:EvidenceMM}). 
\end{remark}

The proof of this theorem occupies the rest of this section.

\subsection{Schur polynomials} We recollect some facts about the Schur polynomials $s_\lambda(\mathbf{v})$ that can be defined, for a sufficiently large $N$, by the following formula:
\begin{equation}
s_\lambda(\mathbf{v}) := \frac{\det(v_i^{\lambda_j - j + N})}{\Delta(\mathbf{v})}\ .
\end{equation}

The Schur polynomials are related to representations of the symmetric group (and thus to Hurwitz numbers) by the Frobenius formula
\begin{equation}
s_{\lambda}({\mathbf v}) = \frac{1}{n!} \sum_{|\mu| = n} |C_\mu|\chi_{\lambda}(C_\mu)\tilde{p}_\mu \quad \text{ (where } \tilde{p}_m = \sum_{i=1}^{l(\mu)} v_i^{m} \text{ ) .} 
\end{equation}

There is an expression for~$s_\lambda$ in terms of the Itzykson-Zuber integral (see~\cite{IZ,BEMS})
\begin{equation}
I(X,Y) := \int_{U(N)} \dd U e^{\Tr(XUYU^\dagger)} = \frac{\det(e^{x_i y _j})}{\Delta(X)\Delta(Y)} ,
\end{equation}
where $\dd U$ is the Haar measure on $U(N)$, normalized according to the second equality. Denote by $\mathbf{h}_\lambda$ the diagonal matrix $\diag(h_1 \ldots h_N)$, where $h_i = \lambda_i - i + N$, and by $\Delta(\mathbf{h}_\lambda)$ the Vandermonde determinant of its diagonal entries. Then we have:
\begin{equation}\label{eq:schur-IZ}
s_\lambda(\mathbf{v}) = \Delta(\mathbf{h}_\lambda)\frac{\Delta(\mathbf{R})}{\Delta(\mathbf{v})} I(\mathbf{h}_\lambda, \mathbf{R}) .
\end{equation}

\subsection{Partition function in terms of the Itzykson-Zuber integral}\label{sec:partIZ}
First, we rearrange the partition function for completed Hurwitz numbers in the following way:
\begin{align}
Z(\mathbf{p},g_s;t) & = 
\sum_{K=0}^{\infty} t^n \sum_{m=0}^{\infty} \frac{g_s^{mr - K}}{m!} \sum_{\begin{smallmatrix}
|\lambda| = K\\ 
l(\lambda) \leq N
\end{smallmatrix}} s_\lambda(\mathbf{v})\frac{\dim(\lambda)}{n!} \left(\frac{\mathbf{p}_{r+1}(\lambda)}{r+1}\right)^m 
\\ \notag
& = 
\sum_{l(\lambda)\leq N} \left(\frac{t}{g_s}\right)^{|\lambda|} \frac{\dim(\lambda)}{n!} s_\lambda(\mathbf{v}) e^{g_s^r \frac{\mathbf{p}_{r+1}(\lambda)}{r+1}}.
\end{align}
Here we use the interpretation of $p_k$, $k=1,2,\dots$, as symmetric functions in $v_i$, $1\leq i\leq N$. Furthermore, the formula above should be interpreted order by order in powers of~$t$; for any given power~$K$ of~$t$, the formula is true for~$N$ larger than~$K$. We should keep this interpretation in mind throughout the rest of the computations.

Suppose that we have found functions $A_{r+1}(x)$ such that
\begin{equation}\label{eq:A-def}
\sum_{i=1}^N A_{r+1}(h_i) = \frac{\mathbf{p}_{r+1}(\lambda)}{r+1} ,
\end{equation}
so that in particular
$$
\sum_{i=1}^N A_1(h_i) = |\lambda|.
$$
Then, applying Equation~\eqref{eq:schur-IZ} and the equality
$$
\frac{\dim(\lambda)}{|\lambda|!}= \frac{\Delta(\mathbf{h})}{\prod_{i=1}^N h_i!}
\quad \mbox{for} \quad
N\geq l(\lambda),
$$ 
we get the following:
\begin{align} \label{eq:Z-Nsum}
&Z(\mathbf{p},g_s;t) 
= 
\frac{\Delta(\mathbf{R})}{\Delta(\mathbf{v})} \sum_\lambda \left(\frac{t}{g_s}\right)^{|\lambda|} I(\mathbf{h}_\lambda, \mathbf{R}) \frac{(\Delta(\mathbf{h}_\lambda))^2}{\prod_{i=1}^N h_i!} \prod_{i=1}^N e^{g_s^r A_{r+1}(h_i)}
\\ \notag
& = 
\frac{\Delta(\mathbf{R})}{\Delta(\mathbf{v})} \sum_{h_1 > \cdots > h_N \geq 0} I(\mathbf{h},\mathbf{R})(\Delta(\mathbf{h}))^2\prod_{i=1}^N \frac{e^{g_s^r A_{r+1}(h_i)} (g_s/t)^{-A_1(h_i)}}{\Gamma(h_i + 1)}
\\ \notag 
& = 
\frac{1}{N!}\frac{\Delta(\mathbf{R})}{\Delta(\mathbf{v})} \sum_{h_1, \ldots, h_N \geq 0} I(\mathbf{h},\mathbf{R})(\Delta(\mathbf{h}))^2\prod_{i=1}^N \frac{e^{g_s^r A_{r+1}(h_i)}(g_s/t)^{-A_1(h_i)}}{\Gamma(h_i + 1)} .
\end{align}

\begin{remark}
Note that we should be careful when writing~$(g_s/t)^{A_1(h)}$, since this might introduce non-integer powers of the formal variables $g_s$ and~$t$. In fact, by equation~\eqref{eq:A1}, we obtain half-integer powers. However, it is clear that in equation~\eqref{eq:Z-Nsum} they will cancel in the full product over~$i$.
\end{remark}

\subsection{Computation of $A_{r+1}$}\label{sec:compA} In this section we compute explicitly the polynomials $A_{r+1}$ using Equation~\eqref{eq:A-def} as a definition. The result will coincide with Equation~(\ref{Eq:A}).


We have
\begin{align}
\mathbf{p}_{r+1}(\lambda)  &= \sum_{i=1}^{N}\left((\lambda_i - i + \frac{1}{2})^{r+1} - (-i + \frac{1}{2})^{r+1}\right)
\notag \\  &
= \sum_{i=1}^{N}\left((h_i - N + \frac{1}{2})^{r+1} - (-i + \frac{1}{2})^{r+1}\right)
\\ \notag &
= \sum_{i=1}^{N}\sum_{k=0}^{r+1}\binom{r+1}{k}(-N + \frac{1}{2})^k h_i^{r+1-k} - \sum_{j=1}^N\left(\frac{-2j+1}{2}\right)^{r+1}.
\end{align}
The second term can be represented in the following form:
\begin{align}
& \sum_{j=1}^N\left(\frac{-2j+1}{2}\right)^{r+1} = \frac{1}{(-2)^{r+1}} \left(\sum_{j=1}^{2N-1} j^{r+1} - \sum_{k=1}^{N-1} (2k)^{r+1}\right)
\\ \notag 
& = \frac{(-1)^{r+1}}{r+2} \sum_{k=0}^{r+1}\binom{r+2}{k} (-1)^k B_k \left(\frac{1}{2^{r+1}} (2N)^{r+2-k} - N^{r+2-k}\right) 
\\ \notag 
& = \frac{(-1)^r}{r+2}\sum_{k=0}^{r+1}\binom{r+2}{k} (-1)^k B_k \left(\frac{(2^{k-1}-1)N^{r+2-k}}{2^{k-1}}\right)
\end{align}
(here $B_k:=B_k(0)$, $k=0,1,\dots$, are the Bernoulli numbers).

Thus we have the following formula for $A_{r+1}$:
\begin{align}
A_{r+1}(x) = & \sum_{k=0}^{r+1}\left(\binom{r+1}{k}(-N+\frac{1}{2})^k\frac{x^{r+1-k}}{r+1} \right. 
\\ \notag & 
\left. +\frac{(-1)^{r+1}}{(r+2)(r+1)}\binom{r+2}{k} (-1)^k B_k \frac{(2^{k-1} - 1) N^{r+1-k}}{2^{k-1}}\right)
\\ \notag 
=  \sum_{k=0}^{r+1}&\left(r! \frac{(-N+\frac{1}{2})^k}{k!}\frac{x^{r+1-k}}{(r+1-k)!} \right.
\\ \notag & 
\left.
+ (-1)^{r+1} r! \frac{(-1)^k B_k}{k!} \frac{(2^{k-1} - 1)}{2^{k-1}}\frac{N^{r+1-k}}{(r+2-k)!}\right)
\end{align}
in agreement with~(\ref{Eq:A}).
In particular, we have
\begin{equation}\label{eq:A1}
A_1(x) = x - \frac{N-1}2.
\end{equation}

\subsection{Contour integral}\label{sec:defcont}
We now replace the~$N$ sums in Equation~\eqref{eq:Z-Nsum} for the partition function by integrals over a contour $\mathcal{C}_D$ enclosing the non-negative integers less than or equal to~$D$. For that we use a function which has simple poles with residue~$1$ at all integers:
\begin{equation}\label{eq:f}
f(\xi) := \frac{\pi e^{-\I\pi\xi}}{\sin(\pi \xi)} = - \Gamma(\xi + 1)\Gamma(-\xi)e^{-\I\pi\xi}.
\end{equation}

Note that for any~$K$, only finitely many terms of the sum in~\eqref{eq:Z-Nsum} contribute to the coefficient of~$t^K$. Thus, when we want to compute any such coefficient, we can replace the sum by a finite one:
\begin{multline}
[t^K] Z(\mathbf{p}, g_s; t) = [t^K] Z_D(\mathbf{p}, g_s;t) := \\ \notag
[t^K] \frac{1}{N!}\frac{\Delta(\mathbf{R})}{\Delta(\mathbf{v})} \sum_{h_1, \ldots, h_N = 0}^D I(\mathbf{h},\mathbf{R})(\Delta(\mathbf{h}))^2\prod_{i=1}^N \frac{e^{g_s^r A_{r+1}(h_i)}(g_s/t)^{-A_1(h_i)}}{\Gamma(h_i + 1)},
\end{multline}
which is true as long as~$D \geq K + \frac{N-1}{2}$. 

\begin{remark}
While~$Z$ is a Laurent series that does not converge to a function, the truncated series $Z_D$ obviously does converge to a meromorphic function with domain~$\C$ for all the variables, since it is a finite sum of such functions. 
\end{remark}

Using the function~$f$ defined in equation~\eqref{eq:f} we can rewrite the function~$Z_D$ in terms of residues, if we restrict the domain of $g_s$ and~$t$ to $\C\backslash(-\infty, 0)$:
\begin{multline}\label{eq:Z_DsumRes}
Z_D(\mathbf{p}, g_s;t) = 
\frac{1}{N!}\frac{\Delta(\mathbf{R})}{\Delta(\mathbf{v})}  
\sum_{h_1, \ldots, h_N = 0}^D \Res_{z_1 \rightarrow h_1} \cdots \Res_{z_N \rightarrow h_N} \\
I(\mathbf{z},\mathbf{R})(\Delta(\mathbf{z}))^2\prod_{i=1}^N \frac{f(z_i) e^{g_s^r A_{r+1}(z_i)}(g_s/t)^{-A_1(z_i)}}{\Gamma(z_i + 1)}
\end{multline}
On the right-hand side, $(g_s/t)^{-A_1(z)}$ is defined as $\exp(-A_1(x) \log(g_s/t))$, which requires a choice of branch of the logarithm (one can see that the end result does not depend on this choice), and explains the change in domain. Here, it is important that $g_s$ and~$t$ are no longer just formal variables, but the arguments of a function.  

Finally, the sum over residues can be replaced by a contour integral:
\begin{align}\label{eq:Z_Dint}
Z_D(\mathbf{p},g_s;t) &= 
\frac{1}{N!}\frac{\Delta(\mathbf{R})}{\Delta(\mathbf{v})} \oint_{\mathcal{C}_D^N} \dd h_1 \cdots \dd h_N(\Delta(\mathbf{h}))^2 I(\mathbf{h},\mathbf{R}) 
\\ \notag & {\ } \qquad \qquad \qquad \qquad
\prod_{i=1}^N \frac{f(h_i) e^{g_s^r A_{r+1}(h_i)}(g_s/t
)^{-A_1(h_i)}}{\Gamma(h_i + 1)} .
\end{align}

\begin{remark}\label{Rem:ZvsZD}
Equation~\eqref{eq:Z_Dint} is an equality of functions, and the function on the left-hand side is defined as a (converging) series in~$t$, implying that the function on the right has the same series expansion at~$t=0$. Note that we have to work with~$Z_D$ because it is not possible to write a formula like~\eqref{eq:Z_DsumRes} for~$Z$, since it is only a formal series, and does not converge to any function. Another way to see this is that the integral over all non-negative integers does not converge, so that it is meaningless to take the coefficient of~$t^K$ in that integral.
\end{remark}


Rescaling the integration variables $h_i \rightarrow h_i/g_s$,  we get:
\begin{multline}
Z_D(\mathbf{p},g_s;t) = \frac{g_s^{-N^2}}{N!} \frac{\Delta(\mathbf{R})}{\Delta(\mathbf{v})} 
\oint_{\mathcal{C}_D^N} \dd h_1 \cdots \dd h_N(\Delta(\mathbf{h}))^2 \ \times \\ 
 I\left(\frac{\mathbf{h}}{g_s},\mathbf{R}\right)
\prod_{i=1}^N -\Gamma\left(-\frac{h_i}{g_s}\right) e^{g_s^r A_{r+1}(\frac{h_i}{g_s}) -\frac{I\pi h_i}{g_s}}\left(\frac{g_s}{t}\right)^{-A_1(\frac{h_i}{g_s})} .
\end{multline}

\subsection{Normal matrices and final formula}\label{sec:defmeasure}
As it is done in~\cite{BEMS}, we now replace the integration along the $N$ copies of the contour~$\mathcal{C}_D$ by integration over the space $\mathcal{H}_N(\mathcal{C}_D)$ of $N$ by~$N$ normal matrices with eigenvalues in~$\mathcal{C}_D$. We get
\begin{equation}
Z_D(\mathbf{p}, g_s; t)
) = \lim_{N \to \infty}\frac{1}{N!} \frac{\Delta(\mathbf{R})}{\Delta(\mathbf{v})} \int_{\mathcal{H_N(\mathcal{C}_D)}} \mathrm{d}M e^{- \Tr V(M) + \Tr (M\mathbf{R})} , 
\end{equation}
where~$V$ is as in formula\eqref{eq:potential}:
\begin{equation*}
V(\xi) = -g_s^r A_{r+1}(\frac{\xi}{g_s}) + g_s\log(\frac{g_s}{t}
)A_1(\frac{\xi}{g_s}) + \I\pi\xi -g_s\log(\Gamma(-\frac{\xi}{g_s})) + \I\pi g_s. 
\end{equation*}

In particular, it implies
\begin{equation}
Z(\mathbf{p}, g_s;t) \sim \frac{g_s^{-N^2}}{N!} \frac{\Delta(\mathbf{R})}{\Delta(\mathbf{v})} \int_{\mathcal{H_N(\mathcal{C}_D)}} \dd M e^{- \frac{1}{g_s} \Tr (V(M) - M\mathbf{R})},
\end{equation}
concluding the prove of Theorem~\ref{thm:Zmatrix}.

\section{Evidence for the $r$-BM conjecture} \label{Sec:Evidence}

Here, we present evidence for the $r$-BM conjecture. There are three independent reasons to believe the conjecture. 
The first is the computational evidence for the $r$-ELSV conjecture discussed in the introduction, together with the equivalence of that conjecture to the $r$-BM conjecture proved in Section~\ref{sec:SCHur}. The second is the general idea that the spectral curve for an enumerative problem should be given by its $(0,1)$-geometry, as discussed in~\cite{DMSS12}. The third reason is the generalization of the proof of the Bouchard-Mari\~{n}o conjecture in~\cite{BEMS}. 

In this section, we give a more in depth discussion of those last two reasons. 

\subsection{The $(g,n) = (0,1)$ geometry}
In~\cite{DMSS12}, the authors propose the idea that it should be a general aspect of the Eynard-Orantin topological recursion theory that the spectral curve for an enumerative problem should be given by the so-called $(0,1)$-geometry of the problem. In the case of Hurwitz problems, this means that the spectral curve should be given by their $1$-pointed generating function of genus~$0$. Evidence for this idea consists of several special cases where it works; in fact, in all known examples this seems to be true.

For completed Hurwitz numbers, the spectral curve suggested by the $(0,1)$-geometry was computed in~\cite{MSS}, and it coincides (up to an irrelevant change of variables) with the spectral curve of Conjecture~\ref{Conj:BM}. Furthermore, it was also shown in that paper that the quantization of that curve leads to an operator annihilating the principal specialization of the generating function for Hurwitz numbers, in accordance with another conjecture, formulated by Gukov and Su\l kowski in~\cite{GuSu}, providing further evidence for the $r$-BM conjecture.

\subsection{Spectral curve associated to a matrix model}\label{SSec:EvidenceMM}
In~\cite{BEMS}, a ``phys{\-}ics proof'' of the Bouchard-Mari\~no conjecture is given by first representing the generating function for Hurwitz numbers as a matrix model, and then showing that the spectral curve for this matrix model is equal to the one predicted by the Bouchard-Mari\~no conjecture. We generalized the first part of this proof in the previous section, where we showed that the generating function for completed Hurwitz numbers is given by the matrix model~\eqref{eq:Zmatrix}.


Unfortunately, it seems that the reasoning in~\cite{BEMS} does not constitute a precise mathematical proof, nor can it easily be made into one. On the other hand, all the reasoning in~\cite{BEMS} generalizes directly to the case of completed Hurwitz numbers. Thus, if a way could be found to transform this into a rigorous mathematical proof, it would immediately prove the $r$-BM conjecture.

\begin{remark}
There is another proof of the Bouchard-Mari\~no conjecture in~\cite{EMS09}, but it is based on the ELSV formula, so it is not useful for our purposes. 
\end{remark}

In the rest of this section, we briefly describe the steps taken in~\cite{BEMS} and how they generalize to completed Hurwitz numbers, and we note the places were we believe the reasoning is not mathematically rigorous. Since all the steps in~\cite{BEMS} generalize directly to our case, we do not repeat the detailed steps of that paper, and just give an overview of the reasoning. 

%
 
\subsubsection{Loop equations and topological expansion} It is a general theme in the theory of matrix models that they can be related to a spectral curve by way of so-called \emph{loop equations}. That is, to any matrix model one can associate a free energy $F=\log Z$ and a tower of $n$-point correlation functions $W_n(x_1,\dots,x_n)$, $n\geq 1$. Then, one can ask whether there exists a curve such that those invariants coincide with the symplectic invariants and $n$-point correlation forms associated to this curve by the Eynard-Orantin topological recursion procedure.

In general, the answer to this question is given by varying the integration variable in the matrix integral in a specific way. The resulting equations are called the loop equations, and in good situations they imply that the free energy and correlation functions of the matrix model are given by a specific spectral curve.  

To derive such loop equations, we need $F$ and~$W_n$ to have a Laurent series expansion in powers of~$g_s$ with finite tail (this is called the topological expansion property of the matrix model). In particular, this allows us to define invariants $F_g$ and~$W_{g,n}$ as the coefficients of powers of~$g_s$ in the expansions of $F$ and~$W_n$, which in turn makes it possible to compare them to the symplectic invariants and correlation forms of a spectral curve. In~\cite{BEMS}, it is shown that the matrix model discussed there has this topological expansion property, and the proof goes through in exactly the same way for our matrix model. 

\begin{remark}
In~\cite{BEMS}, the correlators~$W_{g,n}$ are themselves power series in~$g_s$. Because of the triangular nature of the relation between $W_{g,n}$ in that paper and the coefficients of powers of~$g_s$ in~$W_n$, it is immediate that those coefficients of powers of~$g_s$ are well-defined and they are non-zero only for finitely many negative powers of~$g_s$. This does mean that the powers of~$g_s$ in the expansion of~$W_n$ do not necessarily increase in steps of~$2$, but that does not present any problems in the rest of the reasoning.
\end{remark}

\subsubsection{Loop equations}
Given that our matrix model has the topological expansion property, the loop equations are derived as the invariance of the integral under a certain change of variables. In~\cite{BEMS}, the change
$$
M \rightarrow M + \epsilon \frac{1}{x-M}\frac{1}{y-R} 
$$
for~$\epsilon$ small is used, but this does not preserve the property of being a normal matrix, so we prefer the change
\begin{equation}\label{eq:change}
M \rightarrow M + \frac{\epsilon}{2} \frac{1}{x-M}\frac{1}{y-R} + \frac{\epsilon}{2} \frac{1}{y-R}\frac{1}{x-M} 
\end{equation}
which does preserve that property. In Appendix~D of~\cite{EO} the spectral curve equation and the Eynard-Orantin topological recursion for the correlators are derived from such a change of variables for a matrix model of the form~\eqref{eq:Zmatrix}. However, their derivation depends on the potential~$V(x)$ being a rational function of~$x$, independent of~$g_s$, neither of which holds for the matrix model in~\cite{BEMS} or the generalization described here. This does not affect their reasoning when deriving the spectral curve, but they really use those properties of~$V$ to derive the corresponding topological recursion for the~$W_{g,n}$. 

Furthermore, the invariance of the integral under the change of variables depends on the fact that the domain of integration does not change under this change of variables. When using the infinite contour of~\cite{BEMS}, this in fact holds, but for the finite contour~$\cC_D$ (see Remark~\ref{Rem:Zcontour}), it is not the case. That is, one easily sees that the change~\eqref{eq:change} does not affect the property of a matrix being Hermitian (normal matrix with real eigenvalues), but it sends the space of normal matrices with eigenvalues in~$\cC_D$ to the space of normal matrices with eigenvalues on some different contour~$\widetilde{\cC}_D$. 

\begin{remark}
If we were integrating over the space of diagonal matrices with eigenvalues in~$\cC_D$ instead of those that are diagonalizable using unitary matrices, the space would also effectively be invariant under the change of variables, since the integral would only depend on the homotopy type of the contour with respect to the non-negative integers. However, the unitary matrices spoil this symmetry.
\end{remark}

\subsubsection{Spectral curve for completed Hurwitz numbers}  Suppose that we would overcome the problems described above in some way. Then, the loop equations would lead to a spectral curve (depending on~$g_s$) and corresponding topological recursion for the~$W_{g,n}$. The proof of Conjecture~\ref{Conj:BM} could then be completed as in~\cite{BEMS}, using the relation between the $n$-point genus~$g$ correlation functions for Hurwitz numbers and the free energy of the matrix model
\begin{equation}\label{eq:ZandH}
\frac{\d^n H_{g,r}(R_1,\dots,R_n)}{\d R_1\cdots \d R_n} =\left. \frac{1}{g_s^n} \frac{\d^n F_g}{\d R_1 \cdots \d R_n } \right|_{g_s=0}\  
\end{equation}
and some properties of the topological recursion theory and its relation to matrix models. Together, those show that the spectral curve for completed Hurwitz numbers is given by $\cC_\mathrm{s}^{(r)}\colon x = -y^r + \log y$, concluding the evidence for the $r$-Bouchard-Mari\~no conjecture.

\end{document}